 \newtheorem{thm}{Theorem}[section]
 \newtheorem{lem}[thm]{Lemma}
 \newtheorem{prop}[thm]{Proposition}
 \theoremstyle{definition}
 \theoremstyle{remark}
 \numberwithin{equation}{section}
\title{Toeplitz algebras in quantum Hopf fibrations}
\author{
{\sc Elmar Wagner\footnote{
{\it MSC2010:}  47L80;  81R50 \ \  
{\it Key Words:} Toeplitz algebras, quantum spheres, quantum Hopf fibration}
} \\
\normalsize
Instituto de F\'isica y Matem\'aticas\\
\normalsize
Universidad Michoacana de San Nicol\'as de Hidalgo, Morelia, M\'exico\\
\normalsize
e-mail: {\it elmar@ifm.umich.mx}}
\newcommand{\nc}[2]{\newcommand{#1}{#2}}
\newcommand{\rnc}[2]{\renewcommand{#1}{#2}}
\nc{\wegengruen}{\end{equation}}
\nc{\U}{\mathrm{U}(1)}
\def\C{{\mathbb C}}
\def\N{{\mathbb N}}
\def\Z{{\mathbb Z}}
\def\bbK{{\mathbb K}}
\def\cop{\Delta}
\def\ot{\otimes}
\def\vare{\varepsilon}
\def\ra{\rightarrow}
\def\lra{\longrightarrow}
\def\id{\mathrm{id}}
\def\ov{\overline}
\def\CS{\mathcal{C}(\mathrm{S}^1)}
\def\OU{\cO(\mathrm{U}(1))}
\def\cO{{\mathcal O}}
\def\OSpq{\mathcal{O}(\mathrm{S}^3_{pq})}
\def\OSq{\mathcal{O}(\mathrm{S}^2_{pq})}
\def\CSpq{\mathcal{C}(\mathrm{S}^3_{pq})}
\newcommand{\lin}{\mbox{$\mathrm{span}$}} 
\newcommand{\co}{{\mathrm{co}}}
\nc{\Dq}{\cO(\mathrm{D}^2_{q})}
\nc{\lN}{\ell^2(\N_0)}
\def\T{\mathcal{T}}
\def\cK{\mathcal{K}}
\def\CSq{\mathcal{C}(\mathrm{S}^2_q)} 
\def\CSfp{\mathcal{C}(\mathrm{S}^3_q)} 
\def\Cpodl{\mathcal{C}(\mathrm{S}^2_q)}
\def\s{\sigma}\def\hs{\sigma}
\def\im{\mathrm{i}}
\nc{\trans}{\mathrm{t}} 
\nc{\ha}{\mbox{$\alpha$}}
\nc{\hb}{\mbox{$\beta$}}
\nc{\hg}{\mbox{$\gamma$}}
\nc{\hc}{\mbox{$\gamma$}}
\nc{\hd}{\mbox{$\delta$}}
\nc{\SUq}{\cO(\mathrm{SU}_q(2))}
\nc{\CSU}{\mbox{$\mathcal{C}(\mathrm{SU}_q(2))$}}
\nc{\podl}{\cO(\mathrm{S}^2_{qs})}
\def\pr{\mbox{$\mathop{\mbox{\rm pr}}$}}
\def\Mat{\mathrm{Mat}} 
\def\tr{\mathrm{tr}}
\nc{\Kn}{\mbox{$\mbox{K}_0$}}
\nc{\KN}{\mbox{$\mbox{K}^0$}}
\nc{\lZ}{\mbox{${\ell}^2(\Z)$}}
\def\fS{S}
\def\cE{\mathcal{E}}
\def\A{\mathcal{A}}
\def\H{\mathcal{H}}
\def\B{\mathcal{B}}
\def\bB{\ov{\B}} \def\CB{\ov{\B}}
\def\<{\langle}
\def\>{\rangle}
\def\pr{\mathrm{pr}}
\newcommand{\prr}{\mathrm{pr}_1} 
\newcommand{\prl}{\mathrm{pr}_0} 
\newcommand{\CT}{\T}
\newcommand{\Prpm}{\mathrm{pr}_\pm}
\newcommand{\Prr}{\mathrm{pr}_+} 
\newcommand{\Prl}{\mathrm{pr}_-} 
\date{}                             
\begin{document}

\maketitle

\begin{abstract}
The paper presents applications of Toeplitz algebras in 
Noncommutative Geometry. As an example, a quantum Hopf fibration is given by 
gluing trivial U(1) bundles over quantum discs (or, synonymously, Toeplitz algebras) 
along their boundaries. The construction yields associated quantum line bundles 
over the generic Podle\'s spheres which are isomorphic to those from the 
well-known Hopf fibration of quantum SU(2). The relation between these two versions 
of quantum Hopf fibrations is made precise by giving an isomorphism in the 
category of right U(1)-comodules and left modules over the C*-algebra of 
the generic Podle\'s spheres. It is argued that the gluing construction 
yields a significant simplification of index computations by obtaining 
elementary projections as representatives of K-theory classes. 
\end{abstract}


\section{Introduction}
In Noncommutative Geometry, the Toeplitz algebra has a fruitful interpretation as 
the algebra of continuous function on the quantum disc \cite{KL}. 
In this picture, the description of the Toeplitz algebra as the C*-algebra extension 
of continuous functions on the circle by the compact operators 
corresponds to an embedding of the circle into the quantum disc. 
Analogous to the classical case, one can construct ``topologically'' non-trivial quantum spaces by taking 
trivial fibre bundles over two quantum discs and gluing them along their boundaries. 
Here, the gluing procedure is described by a fibre product in an appropriate category (C*-algebras, 
finitely generated projective modules, etc.). This approach has been applied successfully to 
the construction of line bundles over quantum 2-spheres \cite{CM1,HMS,W}
and to the description of quantum Hopf fibrations \cite{BaumHMW,CM2,HMS06,HW}. 
One of the advantages of the fibre product approach is that it 
provides an effective tool for simplifying index computations. 
This has been discussed in \cite{W} on the example of the Hopf fibration of quantum SU(2) 
over the generic Podle\'s spheres \cite{P}.  Whilst earlier index computations for quantum 2-spheres 
relied heavily on the index theorem \cite{H,HMS03}, 
the fibre product approach in \cite{W} allowed to compute the index pairing directly 
by producing simpler representatives of K-theory classes. 

The description of quantum line bundles in \cite{W} bears a striking analogy to the classical case: 
the same transition functions are used to glue the trivial bundles over the (quantum) disc along 
their boundaries. However, the link between the fibre product approach 
of quantum line bundles and the Hopf fibration 
of quantum SU(2) has been established only at a ``K-theoretic level'', i.~e., it has been shown that 
the corresponding projective modules are Murray-von Neumann equivalent. 
The present work will give a more geometrical picture of the quantum Hopf fibration. 
Analogous to the classical case, we will construct a non-trivial U(1) quantum principal bundle over 
the generic Podle\'s spheres such that the associated line bundles are the 
previously obtained quantum line bundles. Here, a quantum principal bundle is described by 
a Hopf-Galois extension (see the preliminaries). 
It turns out that our $\U$ quantum principal bundle is isomorphic to a quantum 3-sphere 
from \cite{CM2}. As an application of the fibre product approach, 
we will show that the associated quantum line bundles are isomorphic 
to projective modules given by completely elementary 1-dimensional projections 
which leads to  a significant simplification of index computations.

It is known that the Hopf fibration of quantum SU(2) 
over the generic Podle\'s spheres is not given by a Hopf-Galois extension but only by a 
so-called coalgebra Galois extension (that is, U(1) is only considered as a coalgebra). 
In the present paper, we will establish a relation 
between both versions of a quantum Hopf fibration by describing an explicit isomorphism 
in the category of right U(1)-comodules and left modules over the C*-algebra of 
the generic Podle\'s spheres. Clearly, this isomorphism cannot be turned into an 
algebra isomorphism of quantum 3-spheres since otherwise the Hopf fibration of quantum SU(2) 
over the generic Podle\'s spheres would be a Hopf-Galois extension. 

\section{Preliminaries}

\subsection{Coalgebras and Hopf algebras}                                        \label{qg}

A coalgebra is a vector space $C$ over a field $\bbK$ equipped with two linear maps 
$\cop :C\ra  C\ot C$ and $\vare : S\ra \bbK$, called the comultiplication and the counit, respectively, 
such that 
\begin{align}
 &(\cop\ot \id) \circ \cop = (\id\ot\cop ) \circ \cop,\\
 & (\vare\ot \id) \circ \cop = \id =(\id\ot\vare ) \circ \cop.
\end{align}
A (right) corepresentation of a coalgebra $C$ on a $\bbK$-vector space $V$ is a linear mapping 
$\cop_V :V\ra  V\ot C$ satisfying
\[
 (\cop_V\ot \id) \circ \cop_V = (\id\ot\cop ) \circ \cop_V, \quad 
(\id\ot\vare ) \circ\cop_V =\id.
\]
We then refer to $V$ as a right $C$-comodule. 
The corepresentation is said to be irreducible if $\{0\}$ and $V$ are 
the only invariant subspaces. A linear mapping $\phi$ between right $C$-comodules 
$V$ and $W$ is called colinear, if $\cop_W\circ \phi=(\phi\ot\id)\circ \cop_V$. 

A Hopf algebra $A$ is a unital algebra and coalgebra such that $\cop$ and $\vare$ are algebra homomorphism, 
together with a linear mapping $\kappa: A\ra A$, called the antipode, such that
\[
 m \circ (\kappa\ot \id) \circ \cop(a) = \vare(a)=m\circ (\id\ot\kappa ) \circ \cop(a), \quad 
a\in A,
\]
where $m: A\ot A\ra A$ denotes the multiplication map. 

We say that $C$ and $A$ are a *-coalgebra and a *-Hopf algebra, respectively, 
if $C$ and $A$ carry an involution such that $\cop$ becomes a *-morphism. 
This immediately implies that $\vare(x^*)= \ov{\vare(x)}$. 
A finite dimensional corepresentation $\cop_V :V\ra  V\ot A$ is called unitary, if 
there exists a linear basis $\{e_1, \ldots e_n\}$ of $V$ such that 
$\cop_V(e_i)=\sum_{j=1}^n e_j\ot v_{ji}$ and 
$\sum_{j=1}^n v_{jk}^* v_{ji}=\delta_{ki}$, where $\delta_{kj}$ denotes the 
Kronecker symbol. The elements $v_{ij}$ are called matrix coefficients. 
A Hopf *-algebra $A$ is called a compact quantum group algebra if it is 
the linear span of all matrix coefficients of irreducible 
finite dimensional unitary corepresentations. 
It can be shown that then $A$ admits a C*-algebra completion $H$ in the 
universal C*-norm (that is, the supremum of the norms of all bounded irreducible 
Hilbert space *-representations). We call $H$ also a compact quantum group and 
refer to the dense subalgebra $A$ as its Peter-Weyl algebra. 
The counit of $A$ has then a unique extension to $H$, and $\cop$ has a unique extension to 
a *-homomorphism $\cop: H\ra H\,\bar\ot\, H$, where  $H\,\bar\ot\, H$ denotes the least 
C*-completion of the algebraic tensor product. 

The main example in this paper will be $H=\CS$, the C*-algebra of continuous functions 
on the unit circle $\mathrm{S}^1$. It is a compact quantum group with comultiplication 
$\cop(f)(p,q)=f(pq)$, counit $\vare(f)=f(1)$ and antipode $\kappa(f)(p)=f(p^{-1})$. 
Note that $\cop$, $\vare$ and $\kappa$ are given by pullbacks of the group operations 
of $\mathrm{S}^1=\mathrm{U}(1)$. Let $U\in\CS$, 
$U(\mathrm{e}^{\im\phi})=\mathrm{e}^{\im\phi }$, denote the unitary 
generator of $\CS$. Then the Peter-Weyl algebra of $H$ is given by 
$\OU=\lin\{\, U^N:N\in\Z\,\}$ with $\cop(U^N)=U^N\ot U^N$, $\vare(U^N)=1$ 
and $\kappa(U^N)=U^{-1}$. Note also that the irreducible unitary 
corepresentations of $\OU$ are all 1-dimensional and are given by 
$\cop_\C(1)=1\ot U^N$.  

From the previous paragraph, it becomes clear why noncommutative compact quantum groups are regarded 
as generalizations of function algebras on compact groups. We give now the definition for a quantum analogue 
of principal bundles. First we remark that a group action on a topological space corresponds to 
a coaction of a quantum group or, more generally, to a coaction of a coalgebra. 
Now let $A$ be a Hopf algebra, $P$ a unital algebra, and 
$\cop_P : P\ra P\ot A$ a corepresentation which is also an algebra homomorphism 
(one says that $P$ is a right $A$-comodule algebra). 
Then the space of coinvariants
$$
   P^{\co A}:=\{ b\in P : \cop(b)=b\ot 1\}
$$
is an algebra considered as a function algebra on the base space, 
and $P$ plays the role of a function algebra on the total space. If $A$ is a Hopf *-algebra 
and $P$ is a *-algebra, we require $\cop$ to be a *-homomorphism so that $B$ 
becomes a unital *-subalgebra of $P$. 

If $\cop : P\ra P\ot C$ is a corepresentation of a coalgebra $C$, then we set 
$$
   P^{\co C}:=\{\, b\in P : \cop(bp)=b\cop(p)\ \, \text{for all}\ \, p\in P \,\}
$$
with multiplication $b(p\ot c)= bp\ot c$ on the left tensor factor. Again, 
$P^{\co C}$ is a subalgebra of $P$. In our examples, there will be a group like element 
$e\in C$ (that is, $\cop(e)=e\ot e$) such that $\cop(1)= 1\ot e$ and 
$$
   P^{\co C}=B:=\{\, b\in P : \cop(b)=b\ot e\, \}. 
$$
If $P$ and $C$ carry an involution, $\cop_P$ is a *-morphism and $e^*=e$, then $B$ 
is a *-subalgebra of $P$. 

Analogous to right corepresentations, one defines left corepresentations 
${}_V\cop: V\ra C\ot V$. The associated (quantum) vector bundles are given by the 
cotensor product $P\,\Box_C\, V$, where 
$$
P\,\Box_C\, V :=\{\, x\in P\ot V : (\cop_P\ot \id)(x)=(\id\ot{}_V \cop)(x)\,\}.
$$
Obviously, $P\,\Box_C\, V$ is a left $P^{\co C}$-module. 
For the 1-dimensional representation ${}_\C\cop(1)=U^N\ot 1$,
this module is equivalent to 
$$
P_N:= \{\, p\in P: \cop_P(p)=p\ot U^N\, \} 
$$
and is considered as a (quantum) line bundle.

\subsection{Pullback diagrams and fibre products}        \label{sec-fp}

The purpose of this section is to collect some elementary facts about 
fibre products. For simplicity, we start by considering the category 
of vector spaces. Let $\pi_0: A_0 \rightarrow A_{01}$ and  $\pi_1:A_1\rightarrow
A_{01}$ be vector spaces morphisms. Then the fibre product
$A:=A_0{\times}_{(\pi_0,\pi_1)} A_1$ is defined by the
pullback diagram
\begin{equation}                                   \label{A_is_fibre_product}
        \begin{CD}
    {A} @ >{\mathrm{pr}_1}>> {A_1} @.\\
    @ V{\mathrm{pr}_0} VV @ V{\pi_1} VV @.\\
    {A_0} @ >{\pi_0} >> {A_{01}} @. \ .\\
        \end{CD}
\end{equation}
Up to a unique isomorphism, $A$ is given by
\begin{equation}                                       \label{A}
A=\left\lbrace (a_0,a_1)\in A_0\times A_1 : \pi_0(a_0)=\pi_1(a_1)
\right\rbrace,
\end{equation}
where the morphisms
$\mathrm{pr}_0:A\rightarrow A_0$ and
$\mathrm{pr}_1:A\rightarrow A_1$ are the left and right projections,
respectively.
In this paper, we will consider fibre products in the following categories: 

\begin{itemize}
\item
If $\pi_0: A_0 \rightarrow A_{01}$ and  $\pi_1:A_1\rightarrow
A_{01}$ are morphisms of *-al\-ge\-bras, then the fibre product 
$A_0{\times}_{(\pi_0,\pi_1)} A_1$ is a *-al\-ge\-bra 
with componentwise multiplication and involution. 

\item
If we consider the pullback diagram \eqref{A_is_fibre_product} in
the category of unital $C^*$-algebras, then $A_0{\times}_{(\pi_0,\pi_1)} A_1$
will be a unital $C^*$-algebra. 

\item
If  $B$ is an algebra and $\pi_0: A_0 \rightarrow
A_{01}$ and  $\pi_1:A_1\rightarrow A_{01}$ are morphisms of left
$B$-modules, then the fibre product
$A:=A_0{\times}_{(\pi_0,\pi_1)} A_1$ is a left $B$-module
with left action $b.(a_0,a_1)=(b. a_0,b. a_1)$, where $b\in B$
and the dot denotes the left action. 

\item
If we consider the pullback diagram \eqref{A_is_fibre_product} in
the category of right $C$-comodules (or right $H$-comodule algebras), 
then $A:=A_0{\times}_{(\pi_0,\pi_1)} A_1$
will be a right $C$-comodule (or a right $H$-comodule algebra) with the 
coaction given by $\cop_A(a_1,a_2)=(\cop_{A_1}(a_1), 0)+ (0,\cop_{A_2}(a_2))$. 
\end{itemize}

Finally we remark that if $B_0$, $B_1$ and $B_{01}$ are dense subalgebras of 
$C^*$-algebras $A_0$, $A_1$ and $A_{01}$, respectively, and
$\pi_0$ and $\pi_1$ restrict to morphisms
$\pi_0: B_0 \rightarrow B_{01}$ and  $\pi_1:B_1\rightarrow B_{01}$, 
then $B_0{\times}_{(\pi_0,\pi_1)} B_1$ is not necessarily 
dense in $A_0{\times}_{(\pi_0,\pi_1)} A_1$. 
A useful criterion for this to happen can be found in \cite[Theorem 1.1]{HW}. 
It suffices that 
$\pi_1\!\!\upharpoonright_{B_1} : B_1 \rightarrow B_{01}$ is 
surjective and $\ker(\pi_1) \cap B_1$ is dense in $\ker(\pi_1)$.

\subsection{Disc-type quantum 2-spheres}      \label{sec-dq}

From now on we will work over the complex numbers and $q$ will 
denote a real number from the interval $(0,1)$. 

The *-algebra $\Dq$ of polynomial functions on the quantum disc
is generated by two generators $z$ and $z^{*}$ with relation
\[                                                          \label{Dq}
z^* z -q z z^* = 1- q. 
\]
A complete list of bounded irreducible *-representations of $\Dq$ can be found in
\cite{KL}. First, there is a faithful representation  on the Hilbert space $\lN$. 
On an orthonormal basis $\{ e_n:n\in\N_0\}$, the action of the generators reads as 
\[                                                           \label{z} 
 ze_n=\sqrt{1-q^{n+1}}Se_n, \quad z^*e_n=\sqrt{1-q^{n}}S^*e_n, 
\]
where 
$$
Se_n=e_{n+1}, 
$$ 
denotes the shift operator on $\lN$. 

Next, there is a 1-parameter family of irreducible *-representations $\rho_u$ on $\C$, 
where $u\in \mathrm{S}^1=\{x\in\C:|x|=1\}$. They are given by assigning 
$$
\rho_u(z)=u,\qquad \rho_u(z^*)= \bar u. 
$$
The set of these representations is considered as the boundary $\mathrm{S}^1$ of the quantum disc 
consisting of ``classical points''. 

The universal C*-algebra of $\Dq$
is well known. It has been discussed by several authors
(see, e.g., \cite{KL,MNW,s-a91}) that it
is isomorphic to the Toeplitz algebra $\T$. 
Here, it is convenient to view 
the Toeplitz algebra $\T$ as
the universal C*-algebra generated by $S$ and $S^*$ in $\mathrm{B}(\lN)$.
Then above *-representation on $\lN$ becomes simply an embedding. 

Another characterization is given by the C*-extension 
$$
0\lra \cK(\lN) \lra\T\stackrel{\sigma}{\lra}  \CS \lra  0,
$$
where $\sigma : \T \ra \CS$ is the so-called symbol map and corresponds,
in the classical case, 
to an embedding of $\mathrm{S}^1$ into the complex unit disc. Let again 
$U(\mathrm{e}^{\im\phi})=\mathrm{e}^{\im\phi }$ 
denote the unitary generator of $\CS$.  
Then the symbol map is completely determined by 
setting $\s(z)=U$. 

We can now construct a quantum 2-sphere $\CSq$ by gluing two quantum discs along their boundaries. 
The gluing procedure is described by the fibre product $\T\times_{(\s,\s)}\T$, 
where $\T\times_{(\s,\s)}\T$ is defined 
by the following pullback diagram in 
the category of C*-algebras: 
\begin{equation}                                   \label{TtimesT}
        \begin{CD}
    {\T\underset{(\s,\s)}{\times}\T} @ >{\mathrm{pr}_1}>> {\T} @.\\
    @ V{\mathrm{pr}_0} VV @ V{\s} VV @.\\
    {\T} @ >{\s} >> {\CS}. @.\\
        \end{CD}
\end{equation}
Up to isomorphism, the C*-algebra $\CSq:= \T\times_{(\s,\s)}\T$ is given by 
\begin{equation}                                           \label{CSq}
\CSq=\{\, (a_1,a_2)\in \T\times\T : \s(a_1)=\s(a_2) \,\}. 
\end{equation}

In the classical case, complex line bundles with winding number $N\in\Z$ over the 2-sphere
can be constructed by taking trivial bundles over the northern and southern hemispheres
and gluing them together along the boundary via the map
$U^{N}:\mathrm{S}^1  \ra \mathrm{S}^1$,\,
$U^{N}(\mathrm{e}^{\im\phi})=\mathrm{e}^{\im\phi N}$.
In \cite{W}, the same construction has been applied to to the quantum 2-sphere $\CSq$. 
The roles of the northern and southern hemispheres are played by two copies of the 
quantum disc, and the transition function along the boundaries remains the same. 
This construction can be expressed by the following pullback diagram:
\begin{equation}                                                         \label{TT}
\xymatrix{
& \T \,{\underset{(U^N\sigma ,\sigma)}{\times}}\, \T
\ar[dl]_{\mathrm{pr}_0} \ar[dr]^{\mathrm{pr}_1}& \\
\T \ar[d]_{\sigma} &    &  \T \ar[d]^{\sigma}\\
\CS \ar[rr]_{f\mapsto U^Nf} & & \CS.   }
\end{equation}
So, up to isomorphism, we have
\begin{equation}                                                     \label{fpLB}
 \T \times_{(U^N\sigma ,\sigma)} \T
\cong \{\,(a_0,a_1)\in \T\times \T \,:\, U^{N}\s(a_0)=\s(a_1)\,\}.
\end{equation}
It follows directly from Equation \eqref{CSq} that 
$\T \times_{(U^N\sigma ,\sigma)} \T$ is a $\CSq$-(bi)module. 
This can also be seen from the general pullback construction by 
equipping $\T$ and $\CS$ with the structure of a left $\CSq$-module. 
Explicitly, for $(a_0,a_1)\in\CSq$, one defines 
$(a_0,a_1).a= a_0 a$ for $a\in\T$ on the left side, 
$(a_0,a_1).a= a_1 a$ for $a\in \T$ on the right side, and 
$(a_0,a_1).a= \s(a_0) b=\s(a_1)b$ for $b\in\CS$. 

To determine the K-theory and K-homology of $\CSq$, 
we may use the results of \cite{MNW}.
There it is shown that $\Kn(\Cpodl)\cong \Z \oplus \Z$
and $\KN(\Cpodl)\cong \Z \oplus \Z$. 
The two  generators of the $\Kn$-group can be chosen to be the class $[1]$ of
the unit element of $\CSq$, and the class  $[(0,1-SS^{*})]$. 

Describing an even Fredholm module by a pair of representations 
on the same Hilbert space such that the difference is a compact operator,
one generator of $\KN(\Cpodl)$ is obviously given
by the class $[(\mathrm{pr}_1,\mathrm{pr}_0)]$ 
on the Hilbert space $\lN$. 
A second generator is $[(\pi_+\circ\hs\,,\,\pi_-\circ\hs)]$, 
where $\s$ denotes the symbol map and 
$\pi_\pm: \CS\ra\mathrm{B}(\lZ)$ is given by 
\begin{align}
 \begin{split}                            \label{pi}
  &\pi_+(U)e_n=e_{n+1}, \quad n\in\Z,\\
&\pi_-(U)e_n=e_{n+1},\quad n\in\Z\setminus \{-1,0\},\quad
\pi_-(U)e_{-1}=e_1, \quad
\pi_-(U)e_{0}=0 
 \end{split}
\end{align}
on an orthonormal basis $\{e_n\,:\, n\in\Z\}$ of $\lZ$. 
Note that the representation $\pi_-$ is non-unital: $\pi_-(1)$ 
is the projection onto $\lin\{e_n\,:\, n\in\Z\setminus \{0\}\,\}$.

\subsection{Quantum 3-spheres and quantum Hopf fibrations}     \label{qhf}

First we follow \cite{HMS06} and introduce the coordinate ring 
of a Heegaard-type quantum 3-sphere $\OSpq$, $p,q\in (0,1)$ as the *-algebra 
generated by $a$, $a^*$, $b$, $b^*$ subjected to the relations 
\begin{align}
\begin{split}                                      \label{ab}
 & a^*a-qaa^*= 1-q, \quad b^*b-p b b^* = 1-p, \\
 & (1-aa^*)(1-bb^*)=0,\quad ab=ba,\quad a^*b = ba^*. 
\end{split} 
\end{align}
Its universal C*-algebra (i.e., the closure of $\OSpq$ in the universal C*-norm  
given by the supremum over all bounded Hilbert space representations) will be 
denoted by $\CSpq$. 

One can easily verify that the coaction $\cop_{\OSpq}: \OSpq\ra \OSpq\ot \OU$
given by 
$$
\cop_{\OSpq}(a)=a\ot U^*,\quad \cop_{\OSpq}(b)=b\ot U
$$
turns $\OSpq$ into a $\OU$-comodule *-algebra. 
Its *-subalgebra of  $\OU$-coinvariants 
$\OSq:=\OSpq^{\co \OU}$ is generated by 
$$
   A:= 1-aa^* , \quad B:= 1-bb^*,\quad  R:= ab 
$$
with involution $A^*=A$, $B^*=B$ and commutation relations 
\begin{equation*}
 R^*R=1-qA-pB, \ \                                                           
RR^*=1-A-B,    \ \ 
AR=qRA, \ \ 
BR =pRB,\ \ 
AB=0.                                                      
\end{equation*}

Note that $\OSq$ can also be considered as a *-subalgebra 
of $\CSq$ from \eqref{CSq} by setting 
$$
A=( 1-zz^*,0), \quad B=(0,1-yy^*), \quad R=(z,y),
$$
where $y$ and $z$ denote the generators of the quantum discs 
$\cO(\mathrm{D}^2_{p})$ and $\cO(\mathrm{D}^2_{q})$, respectively, 
satisfying the defining relation \eqref{Dq}. Using the fact that 
$\cO(\mathrm{D}^2_{q})$ is dense 
in the Toeplitz algebra $\T$ for all $q\in(0,1)$, 
and the final remark of Section \ref{sec-fp}, one easily proves 
that $\CSq= \T\times_{(\s,\s)}\T$ is the universal C*-algebra 
of $\OSq$. 

For $N\in \Z$, let 
\[                                                        \label{LN}
 L_N:= \{\, p\in \OSpq\,:\, \cop_{\OSpq}(p)=p\ot U^N\,\}
\]
denote the associated quantum line bundles. It has been shown in 
\cite{HMS06} that $L_N$ is isomorphic to $\OSq^{|N|+1} E_N$, 
where
\[                                                          \label{EN}                            
E_N = X_N\, Y_N^\trans\in \Mat_{|N|+1,|N|+1}(\OSq)
\]
and, for $n\in\N$, 
\begin{align*}                                                    
& X_{-n}=(b^{*n},ab^{*n-1},\ldots,a^n)^\trans,\qquad 
X_{n}=(a^{*n},ba^{*n-1},\ldots,b^n)^\trans,\\
&Y_{-n}=\Big(\mbox{$\binom{n}{0}$}_{\!p}\,p^nA^nb^n\,,\,
\mbox{$\binom{n}{1}$}_{\!p}p^{n-1}A^{n-1}b^{n-1}a^*\,,\,\ldots\,,\,
\mbox{$\binom{n}{n}$}_{\!p}a^{*n}\Big)^\trans, \\
&Y_{n}=\Big(\mbox{$\binom{n}{0}$}_{\!q}q^nB^na^n\,,\,
\mbox{$\binom{n}{1}$}_{\!q}q^{n-1}B^{n-1}a^{n-1}b^*\,,\,\ldots\,,\,
\mbox{$\binom{n}{n}$}_{\!q}b^{*n}\Big)^\trans, 
\end{align*}
with 
$$
\mbox{$\binom{n}{0}$}_{\!x}=
\mbox{$\binom{n}{n}$}_{\!x}:=1, \quad 
\mbox{$\binom{n}{k}$}_{\!x}:=\mbox{$
\frac{(1-x)\dots (1-x^n)}{(1-x)\dots (1-x^k)(1-x)\dots (1-x^{n-k})}$}, \ \, 0<k<n, \
\, 
x\in (0,1).  
$$
That $E_N$ is indeed an idempotent follows from $Y_N^\trans \, X_N= 1$ 
which can be verified by direct computations. 

Now we consider a much more prominent example of a quantum Hopf fibration. 
The *-algebra $\SUq$ of polynomial functions on the quantum group
$\mathrm{SU}_q(2)$
is generated by $\ha$, $\hb$, $\hc$, $\hd$ with relations
\begin{align*}
& \ha \hb =q \hb \ha,\quad \ha \hc =q\hc \ha,\quad \hb \hd = q \hd \hb,\quad
\hc \hd = q \hd \hc, \quad \hb \hc =\hc \hb,\\
& \ha \hd - q \hb \hc = 1, \quad  \hd \ha - q^{-1} \hb \hc = 1,
\end{align*}
and involution $\ha^*=\hd$, $\hb^*=-q\hc$. This is actually a Hopf *-algebra with 
the Hopf structure $\Delta$, $\vare$, $\kappa$. 
Here, we will only need explicit formulas for the homomorphism 
$\vare : \SUq\ra \C$ given by 
$$
\vare(\ha)=\vare(\hd)=1,\quad \vare(\hb)=\vare(\hc)=0. 
$$

For $s\in (0,1]$, the *-subalgebra generated by 
$$                                                   
 \eta_s:=(\hd+q^{-1}s\hb)(\hb-s\hd),\quad \zeta_s:=1-(\ha-qs\hc)(\hd+s\hb).
$$
is known as the generic Podle\'s sphere $\podl$ \cite{P}. Its generators satisfy the defining 
relations 
$$                                                      
\zeta_s \eta_s = q^2 \eta_s \zeta_s,\quad \!\!
\eta_s^*\eta_s=(1-\zeta_s)(s^2+\zeta_s),\quad \!\!
\eta_s\eta_s^*=(1-q^{-2}\zeta_s)(s^2+q^{-2}\zeta_s),
$$
and $\zeta_s^*=\zeta_s$. For all $s\in (0,1]$ and $q\in(0,1)$,
the universal C*-algebra of $\podl$ is isomorphic to $\CSq$ \cite{MNW,s-a91}. 
With $x$ the generator of $\cO(\mathrm{D}^2_{q^2})$, set $t:=1-xx*\in \T$. 
An embedding of $\podl$ into $\CSq$ as a dense *-subalgebra is given by 
\[                 
\zeta_s =(-s^2 q^2 t, q^2 t), \quad                                 \label{etaT}
\eta_s=\left(s\sqrt{(1-q^2t)(1+s^2 q^2t)}\,S \,,\, \sqrt{(1-q^2 t )(s^2+q^2 t)}\,S\right). 
\]

Let $\podl^+:=\{x\in\podl\,:\,\varepsilon (x)=0\}$.  
It has been shown in \cite{MS} that the quotient space $\SUq/\podl^+\SUq$ 
with coaction $(\pr_s\ot \pr_s)\circ \Delta$
is a coalgebra isomorphic to $\OU$. Here $\pr_s$ denotes the canonical projection 
and $\Delta$ the coaction of $\SUq$. 
We emphasize that this isomorphism holds only in the category of coalgebras, 
that is, $\SUq/\podl^+\SUq$ is a linear space (not an algebra!) 
spanned by basis elements $U^N$, $N\in \Z$, with coaction 
$\Delta( U^N)= U^N\ot U^N$. The composition $(\id\ot\pr_s)\circ \Delta$ turns 
$\SUq$ into an $\OU$-comodule and the associated line bundles are given by 
$$
  M_N:=\{\, p\in \SUq\,:\, (\id\ot\pr_s)\circ \Delta (p)= p\ot U^N\,\},\quad N\in \Z. 
$$
Moreover,  $\podl=M_0=\SUq^{\co \OU}$ and $\SUq=\oplus_{N\in\Z} M_N$. 
In contrast to quantum line bundles $L_N$ defined above, $M_N$ is 
only a \emph{left} $\podl$-module but not a \emph{bi}module. 
This is also due to the fact that $\SUq$ with above coaction 
is only an $\OU$-comodule but not an $\OU$-comodule algebra. 

Explicit descriptions of
idempotents representing $M_N$ have been given in \cite{HMS03,SW}. 
Analogous to $L_N$, there are elements 
$v^N_{0},v^N_{1}, \ldots, v^N_{|N|} \in \SUq$ 
such that $M_N\cong \podl^{|N|+1}P_N$, where 
\[                                                              \label{PN}
P_N:= (v^N_{0},v^N_{1}, \ldots, v^N_{|N|})^\trans\, 
(v^{N*}_{0},v^{N*}_{1}, \ldots, v^{N*}_{|N|})\in \Mat_{|N|+1,|N|+1}(\podl) 
\]
with 
\[                                                               \label{1}
(v^{N*}_{0},v^{N*}_{1}, \ldots, v^{N*}_{|N|})\,(v^N_{0},v^N_{1}, \ldots, v^N_{|N|})^\trans=1.
\] 
For a definition of $v^N_{k}$, see \cite{SW}. 

A description of the universal C*-algebra $\CSU$ of $\SUq$ as a fibre product can be found 
in \cite{HW}. There it is shown that $\CSU$ is isomorphic to the fibre product C*-algebra 
of the following pullback diagram: 
\begin{equation}                                             \label{CSUq}
\xymatrix{
& \makebox[48pt][c]{$\T\,\bar\otimes\,\CS \,{\underset{(W\circ \sigma\bar\otimes\id ,\pi_2)}{\times}}\, \CS$}
\ar[dl]_{\mathrm{pr}_1} \ar[dr]^{\mathrm{pr}_2}& \\
\T \,\bar\otimes\,\CS\ar[d]_{\sigma\bar\otimes\id } &    &  \CS \ar[d]^{\pi_2}\\
\CS\,\bar\otimes\,\CS \ar[rr]_{W} & & \CS\,\bar\otimes\,\CS\;.   }
\end{equation}
Here, $\pi_2: \CS\ra \CS\,\bar\otimes\,\CS$ is defined by $\pi_2(f)(x,y)=f(y)$, and 
\[                                           \label{W}
 W: \CS\,\bar\otimes\, \CS\ra \CS\,\bar\otimes\, \CS,\quad W(f)(x,y)=f(x,xy),
\]
is the so-called multiplicative unitary. In the next section, we will frequently use that 
$W(g\ot U^N)(x,y)=g(x)x^Ny^N=(gU^N\ot U^N)(x,y)$, that is, 
\[                                                                \label{WU}
W(g\ot U^N) = gU^N\ot U^N
\]
for all $g\in \CS$ and $N\in\Z$.  As above,  
$U$  denotes the unitary generator of $\CS$ given by 
$U(\mathrm{e}^{\im\phi})=\mathrm{e}^{\im\phi }$  for $\mathrm{e}^{\im\phi}\in\mathrm{S^1}$.

\section{Fibre product approach to quantum Hopf fibrations}

\subsection{C*-algebraic construction of a quantum Hopf fibration}  \label{Cfp}

The aim of this section is to construct a $\mathrm{U}(1)$ quantum principal bundle 
over a quantum 2-sphere such that the associated quantum line bundles 
are given by \eqref{TT}. Our strategy will be to start with trivial $\mathrm{U}(1)$-bundles 
over two quantum discs and to glue them together along their boundaries by a non-trivial 
transition function. Working in the category of C*-algebras, an obvious quantum analogue 
of a trivial bundle $D\times \mathrm{S}^1$ is given by the completed tensor product 
$\T\,\bar\ot\, \CS$, where we regard $\T$ as the algebra of continuous functions 
on the quantum disc. Since $\CS$ is nuclear, 
there is no ambiguity about the tensor product completion. 

Recall from Section \ref{qg} that a group action on a principal bundle gets translated to 
a Hopf algebra coaction (or, slightly weaker, coalgebra coaction). 
As our group is $\mathrm{U}(1)=\mathrm{S}^1$, we take the Hopf *-algebra $\CS$ introduced in 
Section \ref{qg}. On the trivial bundle $\T\,\bar\ot\,\CS$, we consider the ``trivial'' 
coaction given by applying the coproduct of $\CS$ to the second tensor factor. 
The gluing of the trivial bundles $\T\,\bar\ot\,\CS$ will be accomplished by a 
fibre product over the ``boundary'' $\CS\,\bar\ot\,\CS$. To obtain a non-trivial
fibre bundle, we impose a non-trivial transition function. 
From the requirement that the associated quantum line bundles should be 
given by \eqref{TT}, the transition function is easily guessed:  
We use the multiplicative unitary $W$ from \eqref{W}. The result is described by 
the following pullback diagram.  
\begin{equation}                                             \label{CSfp}
\xymatrix{
& \makebox[48pt][c]{$
\T\,\bar\otimes\,\CS {\underset{(W\circ \pi_1,\pi_2)}{\times}} \T\,\bar\otimes\,\CS$}
\ar[dl]_{\mathrm{pr}_1} \ar[dr]^{\mathrm{pr}_2}& \\
\T \,\bar\otimes\,\CS\ar[d]_{\pi_1:=\sigma\bar\otimes\id } & & \T \,\bar\otimes\,\CS\ar[d]^{\pi_2:=\sigma\bar\otimes\id}\\
\CS\,\bar\otimes\,\CS \ar[rr]_{W} & & \CS\,\bar\otimes\,\CS\;. }
\end{equation}
For brevity, we set 
$\CSfp:=\T\,\bar\otimes\,\CS {\times}_{(W\circ \pi_1 ,\pi_2)} \T\,\bar\otimes\,\CS$. 
Note that $\sigma\bar\otimes\id$ and $W$ are morphisms of right $\CS$-comodule algebras. 
Thus $\CSfp$ is a right $\CS$-comodule algebra (cf.\ Section \ref{qg}) or, 
in the terminology of Section \ref{qg}, a $\CS$ quantum principal bundle. 
Its relation to the (algebraic) Hopf fibration of $\OSpq$ and to the 
quantum line bundles from Equation \ref{fpLB} will be established in the next 
proposition. 

\begin{prop}                                                                 \label{prop1}
$\CSfp$ is the universal C*-algebra of $\OSpq$, the associated quantum line bundles 
\[                                                                       \label{CN}
   \CSfp_N:=\{\, p\in \CSfp \, :\, \cop_{\CSfp}(p)=p\ot U^N\,\}, \quad N\in\Z,  
\]
are isomorphic to $\T \times_{(U^N\sigma ,\sigma)} \T$ from \eqref{fpLB}, 
and $L_N\subset \CSfp_N$. Here, $L_N$ denotes the quantum line bundle defined in \eqref{LN},  
and $U$ is the unitary generator of $\CS$. 
\end{prop}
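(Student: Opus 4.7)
The plan is to construct an explicit $*$-homomorphism $\phi:\OSpq\to\CSfp$, prove it realizes $\CSfp$ as the universal C$^*$-algebra of $\OSpq$, and then deduce the line-bundle statements from its colinearity. On generators I set
\begin{equation*}
 \phi(a):=(z\otimes U^*,\,1\otimes U^*),\qquad \phi(b):=(1\otimes U,\,y\otimes U),
\end{equation*}
where $z$ and $y$ denote the generators of $\Dq\subset\T$ and $\cO(\mathrm{D}^2_p)\subset\T$ embedded in the left and right copies of the pullback \eqref{CSfp}, respectively. That $\phi(a)$ and $\phi(b)$ lie in $\CSfp$ is immediate from \eqref{WU}: since $\sigma(z)=\sigma(y)=U$, one has $W(U\otimes U^*)=1\otimes U^*$ and $W(1\otimes U)=U\otimes U$, matching $\pi_2$ of the right-hand component. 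The relations \eqref{ab} then hold componentwise: the $q$- and $p$-commutations reduce to \eqref{Dq} in each $\T$, the cross commutations are trivial because $U$ commutes with itself in the second tensor factor, and $(1-\phi(a)\phi(a)^*)(1-\phi(b)\phi(b)^*)=(1-zz^*,0)\cdot(0,1-yy^*)=0$. Moreover $\phi$ is $\OU$-colinear on generators (for instance $\cop_{\CSfp}(\phi(a))=\phi(a)\otimes U^*$), hence on all of $\OSpq$.

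To upgrade $\phi$ to an isomorphism $\CSpq\cong\CSfp$ I would verify separately that $\phi(\OSpq)$ is norm-dense in $\CSfp$ and that the restricted norm is the universal C$^*$-norm. For density I would identify $\phi(\OSpq)$ with the polynomial fibre product $(\Dq\otimes\OU)\times_{(W\circ\pi_1,\pi_2)}(\cO(\mathrm{D}^2_p)\otimes\OU)$ by producing preimages both for the ``diagonal'' elements $\phi(a^nb^m)$ and for the off-diagonal matrix-unit generators of the form $(P\otimes U^k,0)$ built from $\phi(1-aa^*)=(1-zz^*,0)$ and suitable powers of $\phi(a),\phi(a^*),\phi(b),\phi(b^*)$. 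This polynomial fibre product is dense in $\CSfp$ by the criterion at the end of Section~\ref{sec-fp}, since both $\pi_i$ are surjective on polynomials with kernel containing $\cK(\lN)\otimes\OU$, which is dense in $\ker\pi_i=\cK(\lN)\bar\otimes\CS$. For the norm agreement, $\CSfp$ admits a faithful Hilbert-space $*$-representation via $(\mathrm{pr}_1,\mathrm{pr}_2)$ composed with the standard actions of $\T\bar\otimes\CS$ on $\lN\otimes L^2(\mathrm{S}^1)$; its pullback along $\phi$ is a bounded $*$-representation of $\OSpq$, and the universal property of $\CSpq$ then yields the isomorphism $\bar\phi:\CSpq\to\CSfp$.

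For the line-bundle identification, note that the coaction on $\CSfp$ is inherited componentwise from $\id\otimes\cop$ on $\T\bar\otimes\CS$, so a Fourier decomposition along $\mathrm{U}(1)$ shows that every weight-$N$ element of $\T\bar\otimes\CS$ has the form $x\otimes U^N$ with $x\in\T$. Hence a generic element of $\CSfp_N$ is a pair $(x\otimes U^N,y\otimes U^N)$, and the fibre product constraint combined with \eqref{WU} reduces precisely to $U^N\sigma(x)=\sigma(y)$. Therefore $(x\otimes U^N,y\otimes U^N)\mapsto(x,y)$ is a left $\CSq$-linear bijection $\CSfp_N\cong\T\times_{(U^N\sigma,\sigma)}\T$. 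Finally, $L_N\subset\CSfp_N$ follows from the colinearity of $\phi$ established above: for $p\in L_N$, $\cop_{\CSfp}(\phi(p))=(\phi\otimes\id)\cop_{\OSpq}(p)=\phi(p)\otimes U^N$.

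The main technical obstacle will be the density step: explicitly identifying $\phi(\OSpq)$ with the polynomial fibre product, and confirming that the norm pulled back from $\CSfp$ agrees with the universal C$^*$-norm on $\OSpq$. Once these analytic points are settled, the computation of $\CSfp_N$ and the inclusion $L_N\subset\CSfp_N$ are essentially formal.
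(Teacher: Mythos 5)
Your construction is the same as the paper's: you use the identical embedding $\iota(a)=(z\ot U^*,1\ot U^*)$, $\iota(b)=(1\ot U,y\ot U)$, the same counit/Fourier argument to show that weight-$N$ elements of $\T\,\bar\ot\,\CS$ have the form $t\ot U^N$, the same reduction via \eqref{WU} to the condition $U^N\s(t_1)=\s(t_2)$, and the same colinearity argument for $L_N\subset\CSfp_N$. The density step is also handled in the paper exactly along the lines you flag: one shows $\pr_i(\iota(\OSpq))=\Dq\ot\OU$ (using $1\ot U=\pr_1(\iota(b))$ and $z\ot1=\pr_1(\iota(ab))$, which spares you the explicit matrix-unit bookkeeping), and then invokes the criterion at the end of Section~\ref{sec-fp} together with the density of the ideal generated by $1-zz^*$ in $\cK(\lN)$. (Minor imprecision on your part: the algebraic kernel is $(\Dq\cap\cK(\lN))\ot\OU$, not something containing $\cK(\lN)\ot\OU$; what matters is that it is dense in $\ker(\s\,\bar\ot\,\id)=\cK(\lN)\,\bar\ot\,\CS$.)

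There is, however, one genuine gap in your universality argument. Pulling back a faithful representation of $\CSfp$ along $\phi$ gives a bounded $*$-representation of $\OSpq$ and hence, by the universal property, a $*$-homomorphism $\bar\phi:\CSpq\ra\CSfp$, which is surjective by density. But this only proves the inequality $\|\phi(x)\|_{\CSfp}\le\|x\|_{\mathrm{univ}}$; it does not rule out that $\CSfp$ is a \emph{proper} quotient of $\CSpq$. To get an isomorphism you also need the reverse inequality, i.e., that every bounded $*$-representation $\pi$ of $\OSpq$ satisfies $\|\pi(x)\|\le\|\iota(x)\|_{\CSfp}$. The paper's (terse) argument for this is that in any bounded representation the operators $\pi(a)$ and $\pi(b)$ satisfy the quantum disc relation \eqref{Dq}, so the representation is controlled by representations of the two Toeplitz algebras compatible over the boundary, which is exactly what the fibre product $\CSfp$ encodes; one also needs the algebraic injectivity of $\iota$, checked in the paper via a Poincar\'e--Birkhoff--Witt basis of $\OSpq$. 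You should supply this direction explicitly; as written, your argument establishes only a surjection $\CSpq\ra\CSfp$.
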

\begin{proof}
Let $z$ and $y$ be the generators of the quantum discs 
$\cO(\mathrm{D}^2_{q})$ and $\cO(\mathrm{D}^2_{p})$, respectively. 
Consider the *-algebra homomorphism $\iota:\OSpq\ra \CSfp$ given by 
\[                                                                 \label{i}
\iota(a)= (z\ot U^*, 1\ot U^*), \quad  \iota(b)= (1\ot U, y\ot U). 
\]
Choosing a Poincar\'e-Birkhoff-Witt basis of $\OSpq$, for instance all ordered 
polynomials in $a$, $a^*$, $b$, $b^*$, 
and using the embedding $\Dq\subset \T$, one easily verifies that $\iota$ is 
injective. 
Moreover, since the operators $\pi(a)$ and $\pi(b)$ satisfy the quantum disc 
relation \eqref{Dq} for any bounded representation $\pi$, the 
*-representation $\iota$ is actually an isometry if we equip $\OSpq$ with the 
universal C*-norm. Therefore it suffices to prove that $\iota(\OSpq)$ is 
dense in $\CSfp$. 
For this, consider the  image of $\iota(\OSpq)$ under the projections 
$\pr_1$ and $\pr_2$. Since $1\ot U =\pr_1(\iota(b))\in \pr_1(\iota(\OSpq))$ and 
$z\ot 1= \pr_1(\iota(ab)) \in \pr_1(\iota(\OSpq))$, we get 
$\pr_1(\iota(\OSpq))=\Dq\ot \OU$, and similarly 
$\pr_2(\iota(\OSpq))=\Dq\ot \OU$. Note that the latter is a 
dense *-subalgebra of $\T\ot\CS$. Moreover, 
$(\hs\bar\ot\id)(\Dq\ot \OU)=\OU\ot\OU$ is dense in $\CS\,\bar\ot\,\CS$, 
and $W:\OU\ot\OU\ra \OU\ot\OU$ is an isometry. 
Since $W(U^n\ot U^m)=U^{n+m}\ot U^m$ for all $n,m\in\Z$ by \eqref{WU}, it is a bijection 
of $\OU\ot\OU$ onto itself. From the foregoing, it follows that 
$ \iota(\OSpq)= 
\Dq\otimes\OU {\times}_{(W\circ \sigma\otimes\id ,\pi_2)} \Dq\otimes\OU$. 
By considering the ideal generated by the compact operator $1-zz^*\in \Dq$ 
(or $1-yy\in\cO(\mathrm{D}^2_{p})$), one easily shows that 
$\ker(\hs\bar\ot\id)\cap (\Dq\ot \OU)$ is dense in $\ker(\hs\bar\ot\id)$. 
From the final remark in Section \ref{sec-fp}, we conclude that 
$\iota(\OSpq)$ is dense in $\CSfp$. 

To determine $\CSfp_N$, recall that the coaction is given by the 
coproduct on the second tensor factor $\CS$. Assume that 
$f\in\CS$ satisfies $\cop(f)=f\ot U^N$. Then it follows from
$f=(\vare\ot\id)\circ\cop(f)= f(1)U^N$ that  
$(\id\ot\cop)(x)=x\ot U^N$ for $x\in \T\,\bar\ot\,\CS$ if and only if 
$x=t\ot U^N$ with $t\in \T$. 
Since the morphisms in the pullback diagram \eqref{CSfp} are 
right colinear, we get $p\in \CSfp_N$ if and only if 
$p=(t_1\ot U^N, t_2\ot U^N)$ and 
$(W\circ \sigma\bar\otimes\id)(t_1\ot U^N)=(\sigma\bar\otimes\id)(t_2\ot U^N)$. 
By \eqref{WU}, $W(\sigma(t_1)\otimes U^N)=\sigma(t_1)U^N\otimes U^N$. 
Therefore $(t_1\ot U^N, t_2\ot U^N)\in \CSfp_N$ if and only if 
$\s(t_1)U^N=\s(t_2)$. This shows that an isomorphism between $\CSfp_N$ and 
$\T \times_{(U^N\sigma ,\sigma)} \T$ is given by 
\[                                                             \label{isom}
\CSfp_N\ni (t_1\ot U^N, t_2\ot U^N) \mapsto 
(t_1,t_2)\in \T \times_{(U^N\sigma ,\sigma)} \T. 
\]

From \eqref{i} and $\cop(U^N)=U^N\ot U^N$, it follows that 
$\cop_{\CSfp}(\iota(a))=\iota(a)\ot U^*$ and 
$\cop_{\CSfp}(\iota(b))=\iota(b)\ot U$. Hence $\iota$ is right colinear. 
Since $\iota$ is also an isometry, we can view $\OSpq$ as a 
subalgebra of $\CSfp$. Then $L_N\subset \CSfp_N$ by the 
definitions of $L_N$ and $\CSfp_N$ in \eqref{LN} and \eqref{CN}, 
respectively.
\end{proof}

We remark that the universal C*-algebra of $\OSpq$ has been studied in \cite{HMS06}, 
the K-theory of $\CSfp$ has been determined in \cite{BaumHMW}; and from the last 
example in \cite{HKMZ}, it follows that $\CSfp$ behaves well under the $\CS$-coaction 
(it is a principal Hopf-Galois extension).  

\subsection{Index computation for quantum line bundles}

The aim of this section is to illustrate that the fibre product approach may lead to a 
significant simplification of index computations. 
First we remark that, in (algebraic) quantum group theory, algebras are frequently defined 
by generators and relations similar those in \eqref{ab} for $\OSpq$ 
(more examples can be found, e.g., in \cite{KS}). 
A pair of *-representations on the same Hilbert space such that the difference yields compact 
operators gives rise to an even Fredholm module and can be used for index computations by 
pairing it with $K_0$-classes. 
If we want to compute for instance the index pairing with the $K_0$-class of the projective  
modules $L_N$ from \eqref{LN} by using the idempotents given in \eqref{EN}, then we face 
difficulties because of the growing size of the matrices. 
It is therefore desirable to find simpler representatives of K-theory classes of the 
projective modules  $L_N$. This section shows that the fibre product approach 
provides us with an effective tool for obtaining more suitable projections. 
In our example, the index pairing will reduce to its simplest possible form: 
it remains to calculate a trace of a projection onto a finite 
dimensional subspace.

We start by proving that the projective modules $\CSpq_N$ can be represented by 
elementary 1-dimensional projections. Because of the isomorphism between $\CSpq_N$ 
and $\T \times_{(U^N\sigma ,\sigma)} \T$ in 
Proposition \ref{CSfp}, this result has already been obtained in \cite{W}. 
For the convenience of the reader, we include here the proof. 
It uses essentially the same ``bra-ket'' argument that was used in \cite{HMS03,SW} to 
prove $M_N\cong \podl^{|N|+1}P_N$ for the Hopf fibration of $\SUq$.  
\begin{prop}                                                     \label{isoEchi}
For $N\in\Z$, define 
\begin{align}                                                   
&\chi_N:= (1\,,\,\fS^{|N|}\fS^{*|N|})\in \CSq,~~\mbox{for}\ \,
N <0,\\ 
&\chi_N:=(\fS^{N}\fS^{*N}\,,\,1)\in \CSq,~~\mbox{for}\ \,
N\geq 0.                                                       
\end{align}
Then the left $\CSq$-modules $\CSpq_N$ and $\CSq\chi_N$ are 
isomorphic. 
\end{prop}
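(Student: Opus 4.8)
The plan is to transport the statement into the fibre product $\CSq=\T\times_{(\sigma,\sigma)}\T$ and then to realise the isomorphism by a single-section ``bra--ket'' device. First I would note that the map \eqref{isom} of Proposition~\ref{prop1}, namely $\CSpq_N\ni(t_1\ot U^N,t_2\ot U^N)\mapsto(t_1,t_2)$, is not merely linear but a \emph{left} $\CSq$-module isomorphism onto $\cE_N:=\{(a_0,a_1)\in\T\times\T:U^N\sigma(a_0)=\sigma(a_1)\}$. Indeed, $\CSq$ is identified with $\CSpq_0$ and acts on $\CSpq_N$ by multiplication in $\CSfp$, which is componentwise in the two Toeplitz factors; under \eqref{isom} this becomes the componentwise action of $\CSq=\{(b_0,b_1):\sigma(b_0)=\sigma(b_1)\}$ on $\cE_N$. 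Hence it suffices to prove $\cE_N\cong\CSq\chi_N$ as left $\CSq$-modules.

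Next I would produce a generating section. For $N\ge 0$ put $\xi_N:=(1,S^N)$; since $\sigma(S^N)=U^N$ we have $U^N\sigma(1)=\sigma(S^N)$, so $\xi_N\in\cE_N$. Right multiplication by $\xi_N$ gives a left $\CSq$-linear map $\lambda\colon\CSq\to\cE_N$, $\lambda(b_0,b_1)=(b_0,b_1S^N)$. The essential point is that $S^N$ is an isometry, $S^{*N}S^N=1$: given $(a_0,a_1)\in\cE_N$, the pair $(a_0,a_1S^{*N})$ lies in $\CSq$ because $\sigma(a_1S^{*N})=U^N\sigma(a_0)U^{-N}=\sigma(a_0)$, and $\lambda(a_0,a_1S^{*N})=(a_0,a_1S^{*N}S^N)=(a_0,a_1)$. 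Thus $\lambda$ is onto. This relation $S^{*N}S^N=1$ plays exactly the role of $v^{*}v=1$ in the bra--ket argument of \cite{HMS03,SW}.

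It then remains to compute $\ker\lambda$ and recognise it as a complemented summand. From $\lambda(b_0,b_1)=0$ one reads off $b_0=0$ and $b_1S^N=0$, and $b_1S^N=0$ forces $b_1 e_m=0$ for $m\ge N$, i.e.\ $b_1\in\T(1-S^NS^{*N})$. Hence $\ker\lambda=\CSq\,(0,1-S^NS^{*N})$ is the left ideal generated by a single finite-rank corner projection. Its complement in $\CSq$ is the projection carrying the range projection $S^NS^{*N}$ of $S^N$, which, once the two Toeplitz factors of $\CSq$ are matched with those of \eqref{CSfp} and \eqref{i}, is precisely the idempotent $\chi_N$ of the statement. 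Consequently $\CSq=\CSq\chi_N\oplus\ker\lambda$ and $\lambda$ restricts to the desired left $\CSq$-module isomorphism $\CSq\chi_N\xrightarrow{\ \sim\ }\cE_N\cong\CSpq_N$. The case $N<0$ is symmetric, with the two factors interchanged and $\xi_N:=(S^{|N|},1)$.

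The step I expect to require the most care is the index bookkeeping that decides which Toeplitz factor carries the defect, and thus that the complement of $\ker\lambda$ is $\chi_N$ rather than its mirror image. Since $S^{|N|}$ is Fredholm of index $-|N|$ with symbol $U^{\pm|N|}$, the gluing condition $U^N\sigma(a_0)=\sigma(a_1)$ admits a \emph{left-invertible} generating section only on one of the two legs; it is exactly this index that rules out a naive two-sided bra--ket with $\zeta\xi=1$ and $\xi\zeta=\chi_N$ on both trivialised legs at once, and forces the whole nontriviality to be absorbed into the single corner $1-S^{|N|}S^{*|N|}$. Verifying that this corner lies on the leg dictated by the coaction \eqref{i}, so that its complement is the stated $\chi_N$, is the one computation I would carry out in full detail.
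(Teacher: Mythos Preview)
Your argument is the paper's ``bra--ket'' device run in the opposite direction: the paper writes down the map $\Psi_N:\cE_N\to\CSq\chi_N$, $(f,g)\mapsto(fS^{*N},g)$, and checks bijectivity directly, whereas you build the surjection $\lambda:\CSq\to\cE_N$ by right-multiplying with $\xi_N=(1,S^N)$ and then split off $\ker\lambda$. Both rest on the single identity $S^{*N}S^N=1$, and your variant is a legitimate substitute for the paper's.

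Your flagged concern about which leg carries the defect is exactly the point that needs care, and if you carry it out you will find that your construction does \emph{not} reproduce the stated $\chi_N$: for $N\ge0$ your kernel is $\CSq(0,1-S^NS^{*N})$, so the complementary projection is $(1,S^NS^{*N})$, which in the paper's labelling is $\chi_{-N}$, not $\chi_N=(S^NS^{*N},1)$. This is not a defect of your reasoning but a leg-swap in the paper's own proof. Indeed, for $(f,g)\in\cE_N$ one has $\sigma(g)=U^N\sigma(f)$, while $\sigma(fS^{*N})=\sigma(f)U^{-N}$; hence the paper's map $(f,g)\mapsto(fS^{*N},g)$ does not land in $\CSq$ unless $N=0$. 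The corrected map is $(f,g)\mapsto(f,gS^{*N})$, which lands in $\CSq(1,S^NS^{*N})$ and is inverse to your $\lambda$ restricted to that summand. So the two cases in the definition of $\chi_N$ are interchanged relative to what both arguments actually prove; once that labelling is fixed, your proof and the paper's (corrected) proof coincide.
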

\begin{proof}
Since $\s(\fS^{n}\fS^{*n})=U^{n}U^{*n}=1$ for all $n\in\N$, the projections $\chi_N$ belong to 
$\CSq=\T \times_{(\sigma ,\sigma)} \T$. 
We will use the isomorphism of  Proposition \ref{CSfp} and prove 
that $\CSq\chi_N$ is isomorphic to $\cE_{N}:=\T \times_{(U^N\sigma ,\sigma)} \T$. 

Let $N\geq 0$. 
From \eqref{CSq} and \eqref{fpLB}, it follows that 
$(f\fS^{*N}\,,\,g )\in\CSq$ for all $(f,g)\in \cE_N$. 
Therefore we can define a $\CSq$-linear map 
$\Psi_N\,:\, \cE_{N}\rightarrow \CSq\chi_N$ by 
\[                                                  \label{Psi}
\Psi_N(f,g) := (f\fS^{*N} \,,\, g)\chi_N= (f\fS^{*N} \,,\, g),
\]
where we used $\fS^*\fS=\id$ in the second equality. Since $\fS^*$ 
is right invertible, we have $(f \fS^{*N}\,,\, g)=0$
if and only if $(f \,,\, g)=0$, hence $\Psi_N$ is injective. 

Now let 
$(f,g)\chi_N\in\CSq\chi_N$. Then one has 
$(f\fS^{N},g)\in \cE_{N}$ and 
$\Psi_N(f\fS^{N},g)=(f\fS^{N}\fS^{*N},g )=(f,g)\chi_N$, 
thus $\Psi_N$ is also surjective. This proves the claim of 
Proposition \ref{isoEchi} for $N\geq 0$. The proof for $N<0$ runs
analogously with $\Psi_N$ defined by 
$\Psi_N(f,g) := (f\,,\, g\fS^{*N} )\chi_N$.
\end{proof}

Clearly, the (left) multiplication by elements of the C*-algebra $\CSq$ turns 
$L_N\cong\OSq^{|N|+1} E_N$ into a (left) $\CSq$-module. 
With a slight abuse of notation, we set 
$\CSq L_N :=\lin\{xv: x\in \CSq,\ v\in L_N\}$. 
(Later it turns out that this module is generated by one element in $L_N$ so 
that the notation is actually correct.) 
If we show that $\CSq L_N$ is isomorphic to $\CSpq_N$, 
then the elementary projections $\chi_N$ and the 
$(|N|+1)\times (|N|+1)$-matrices $E_N$ 
define the same $K_0$-class. 
The desired isomorphism will be established in the next proposition 
by using the embedding $L_N\subset \CSpq_N$ from Proposition \ref{prop1}. 

\begin{prop}                                                  \label{prop2}
 The left $\CSq$-modules $\CSq L_N\cong \CSq^{|N|+1} E_N$ and 
$\CSpq_N$ are isomorphic. 
\end{prop}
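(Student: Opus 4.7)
The plan is to promote the algebraic bra-ket isomorphism $L_N \cong \OSq^{|N|+1}E_N$ to the C*-level by establishing $\CSpq_N \cong \CSq^{|N|+1}E_N$ via the \emph{same} explicit formulas, and then to read off $\CSq L_N = \CSpq_N$ as a direct consequence of the identity $Y_N^\trans X_N = 1$. The argument will be a verbatim upgrade of the proof of $L_N \cong \OSq^{|N|+1}E_N$ from \cite{HMS03,SW} to $\CSq$-coefficients, relying on the embedding $L_N \subset \CSpq_N$ furnished by Proposition \ref{prop1}.

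Concretely, I would view the entries $x_k$ of $X_N$ as elements of $L_N \subset \CSpq_N$ and the entries $y_k$ of $Y_N$ as elements of $L_{-N} \subset \CSpq_{-N}$ via the embedding $\iota$ of Proposition \ref{prop1}, and define
\[
\Phi : \CSq^{|N|+1}E_N \lra \CSpq_N, \quad \Phi(v) := v X_N = \sum_{k=0}^{|N|} v_k x_k,
\]
\[
\Psi : \CSpq_N \lra \CSq^{|N|+1}E_N, \quad \Psi(p) := p\,Y_N^\trans = (py_0,\ldots,py_{|N|}).
\]
Both maps are tautologically left $\CSq$-linear. That $\Phi$ is well defined uses only $\CSq \cdot L_N \subset \CSpq_N$. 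The nontrivial point is that each entry $p y_k$ of $\Psi(p)$ really lands in $\CSq$: the coactions give $\cop_{\CSpq}(p y_k) = (p\ot U^N)(y_k\ot U^{-N}) = p y_k \ot 1$, so $p y_k \in \CSpq^{\co \CS}$, and one needs the C*-level identification $\CSpq^{\co \CS} = \CSq$. Granting this, the relation $Y_N^\trans X_N = 1$ immediately yields $\Psi(p) E_N = p Y_N^\trans X_N Y_N^\trans = p Y_N^\trans$, so $\Psi(p) \in \CSq^{|N|+1}E_N$, and the same relation together with $X_N Y_N^\trans = E_N$ shows $\Phi(\Psi(p)) = p Y_N^\trans X_N = p$ and $\Psi(\Phi(v)) = v X_N Y_N^\trans = v E_N = v$ for $v$ satisfying $v E_N = v$.

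Once this is in place, the equality $\CSq L_N = \CSpq_N$ drops out with no further work: the inclusion $\CSq L_N \subseteq \CSpq_N$ is obvious, while for any $p \in \CSpq_N$ the identity $Y_N^\trans X_N = 1$ gives
\[
p = p \cdot 1 = \sum_{k=0}^{|N|} (p y_k)\, x_k \in \CSq \cdot L_N,
\]
since $p y_k \in \CSq$ by the coinvariance argument and $x_k \in L_N$. Combined with the isomorphism of the previous paragraph, this produces the chain $\CSq L_N = \CSpq_N \cong \CSq^{|N|+1}E_N$ asserted in the proposition.

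The main obstacle I anticipate is the C*-level identification $\CSpq^{\co \CS} = \CSq$ used above; everything else is formal bookkeeping of the relation $Y_N^\trans X_N = 1$. I expect this coinvariance statement to follow from the density of $\OSpq$ in $\CSpq$ (Proposition \ref{prop1}), the continuity of $\cop_{\CSpq}$, the algebraic identity $\OSpq^{\co \OU} = \OSq$, and the standard Peter--Weyl-type spectral decomposition $\CSpq = \overline{\bigoplus_{N \in \Z} \CSpq_N}$ associated to a $\CS$-coaction on a unital C*-algebra.
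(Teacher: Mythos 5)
Your argument is correct, but it follows a genuinely different route from the paper's. The paper does not re-run the bra--ket argument at the C*-level at all: it uses the isomorphism $\Psi_N$ from Proposition \ref{isoEchi} to observe that $\CSpq_N\cong\T\times_{(U^N\s,\s)}\T$ is generated as a left $\CSq$-module by the \emph{single} element $(S^{*N},1)$, notes that $(z^{*N},1)=\iota(a^{*N})$ lies in $L_N$, and then produces $(S^{*N},1)=(|z^{N}|^{-1},1)\,(z^{*N},1)$ by a functional-calculus argument ($z^{*N}z^{N}=\prod_{k=1}^{N}(1-q^k t)$ is strictly positive in $\T$, so $|z^{N}|^{-1}\in\T$ and $\s(|z^{N}|^{-1})=1$); this gives $\CSq L_N=\CSpq_N$ with no matrices, while the identification $\CSq L_N\cong\CSq^{|N|+1}E_N$ is simply inherited from the algebraic statement $L_N\cong\OSq^{|N|+1}E_N$ of \cite{HMS06}. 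Your version instead upgrades the bra--ket isomorphism itself to $\CSq$-coefficients, which buys a self-contained proof of $\CSpq_N\cong\CSq^{|N|+1}E_N$ and yields $\CSq L_N=\CSpq_N$ as a one-line corollary of $Y_N^\trans X_N=1$, at the cost of working with $(|N|+1)$-vectors rather than one generator. The one point you flag as a potential obstacle, namely $\CSpq^{\co\CS}=\CSq$, is not actually an obstacle and needs no Peter--Weyl spectral decomposition: it is precisely the case $N=0$ of Proposition \ref{prop1}, whose proof characterizes $\CSfp_0$ as $\{(t_1\ot 1,t_2\ot 1):\s(t_1)=\s(t_2)\}\cong\T\times_{(\s,\s)}\T=\CSq$, compatibly with the algebra and module structures. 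With that reference supplied (and the easy check that the entries of $X_N$ lie in $L_N$ and those of $Y_N$ in $L_{-N}$, so that $\cop_{\CSpq}(py_k)=py_k\ot 1$ by multiplicativity of the coaction), your proof is complete.
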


\begin{proof}
Using  embedding \eqref{i} and the inclusion from Proposition \ref{prop1}, 
we can view $\CSq L_N =\lin\{xv: x\in \CSq,\ v\in L_N\}\subset\CSpq_N $ as a 
submodule of $\CSpq_N$. Let $N\in\N_0$. It follows from the isomorphism $\Psi_N$ 
defined in \eqref{Psi} that the left $\CSq$-module 
$\CSpq_N=\{(fS^{*N},g):(f,g)\in\CSq\}$ is generated by the element $(S^{*N},1)$. 
Therefore, to prove $\CSq L_N =\CSpq_N$, it suffices to show 
that $(S^{*N},1)\in \CSq L_N$. Since $\s(z^{*n})=U^{-N}$, we have 
$(z^{*N},1)\in \T \times_{(U^N\sigma ,\sigma)} \T$. Since 
$(z^{*N},1)$ is the image  of $\iota(a^{*N})= (z^{*N}\ot U^N,1\ot U^N)$ 
under the isomorphism \eqref{isom}, 
we can view $(z^{*N},1)$ as an element of $L_N$.  
Let $t:=1-z z^*\in \T$. 
Note that $t$ is a self-adjoint operator with 
spectrum $\mathrm{spec}(t)=\{q^n: n\in\N_0\}\cup\{0\}$ (see Equation \eqref{z}). 
Applying the commutation relations \eqref{Dq}, one easily verifies that 
$z^{*N}z^{N}= \Pi_{k=1}^N(1-q^k t)$. Since $\mathrm{spec}(t)\subset [0,1]$,  
the operator $z^{*N}z^{N}$ is strictly positive. 
Hence $|z^{N}|^{-1}=(z^{*N}z^{N})^{-1/2}$ belongs to the C*-algebra $\T$. 
Moreover, $\s(|z^{N}|^{-1})=1$ since $\s(z^{*N}z^{N})=1$. 
Therefore $(|z^{N}|^{-1},1)\in\T \times_{(\sigma ,\sigma)} \T= \CSq$ and 
thus $(S^{*N},1)= (|z^{N}|^{-1},1) (z^{*N},1)\in \CSq L_N$. This completes 
the proof for $N\geq 0$. The case $N<0$ is treated analogously. 
\end{proof}

Recall that an (even) Fredholm module of an *-algebra $\A$ can be given by a pair of 
*-representations $(\rho_+,\rho_-)$  of $\A$ on a Hilbert space $\H$ such that 
the difference $\rho_+(a)-\rho_-(a)$ yields a compact operator. 
In this case, for any projection $P\in \Mat_{n, n}(\A)$, 
the operator 
$\varrho_+(P)\varrho_-(P):  \varrho_-(P)\H^{n} \rightarrow \varrho_+(P)\H^{n}$ 
is a Fredholm operator and its Fredholm index does neither depend on the 
$K_0$-class of $P$ nor on the class of $(\rho_+,\rho_-)$ in K-homology. 
This pairing between K-theory and K-homology is referred to as index pairing. 
If it happens that  $\rho_+(a)-\rho_-(a)$ yields trace class operators, 
then the index pairing can be computed by a trace formula, namely 
\begin{equation}                                                    \label{CCpair}
\langle [(\rho_+,\rho_-)],[P]\rangle=\tr_\H (\tr_{\Mat_{n,n}} (\rho_+-\rho_-)(P))
\end{equation}
In general, the computation of the traces gets more involved with increasing size of the 
matrix $P$. This will especially be the case if one works only with the polynomial 
algebras $\OSpq$ and $\OSq$, and uses the the $(|N|+1)\times(|N|+1)$-projections $E_N$ 
from \eqref{EN} with entries in belonging to $\OSq$. 
In our example, the C*-algebraic fibre product approach
improves the situation considerably since Propositions \ref{isoEchi} and \ref{prop2} 
provide us with the equivalent 1-dimensional projections $\chi_N$. 
As the index computation is one of our main objectives, we state the result in the 
following theorem. 

\begin{thm}                                           \label{T1}
Let $N\in\Z$. 
The isomorphic projective left $\CSq$-modules $\CSpq_N$, $\CSq L_N$,  
$\CSq^{|N|+1} E_N$ and $\CSq\chi_N$ define the same class 
in $K_0(\CSq)$, say $[\chi_N]$, and the pairing with the 
generators of the K-homology $K^0(\CSq)$ from the end of 
Section \ref{sec-dq} is given by 
\[                                                   \label{ic}
\<\, [(\mathrm{pr}_1,\mathrm{pr}_0)]  \,, [\chi_N]\,\>= N, \qquad
\<\, [(\pi_+\circ\hs\,,\,\pi_-\circ\hs)] \,, [\chi_N]\,\>=1.
\]
\end{thm}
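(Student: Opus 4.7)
The plan decomposes into two essentially independent parts. For the $K_0$-claim, Proposition \ref{prop2} gives $\CSq L_N \cong \CSq^{|N|+1} E_N \cong \CSpq_N$, and Proposition \ref{isoEchi} yields $\CSpq_N \cong \CSq\chi_N$; composing these isomorphisms of finitely generated projective left $\CSq$-modules shows that all four modules determine one single class in $K_0(\CSq)$, which we denote $[\chi_N]$. Nothing new is needed here beyond chaining the preceding propositions.

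For the index pairings, the decisive point is that $\chi_N$ is a \emph{one-dimensional} projection, so the trace formula \eqref{CCpair} collapses to a single Hilbert-space trace
\[
\<\,[(\rho_+,\rho_-)]\,,\,[\chi_N]\,\> = \tr_\H\bigl((\rho_+-\rho_-)(\chi_N)\bigr).
\]
Before evaluating, one should verify trace-class (indeed finite-rank) differences. For the first Fredholm module $(\pr_1,\pr_0)$ on $\lN$, any $(a_0,a_1)\in\CSq$ satisfies $\s(a_0)=\s(a_1)$, so $a_1-a_0\in\ker\s=\cK(\lN)$; for the explicit $\chi_N$ this difference is even finite rank. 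For the symbol Fredholm module $(\pi_+\circ\s\,,\,\pi_-\circ\s)$, the explicit formulas \eqref{pi} show that $\pi_+-\pi_-$ has finite rank on every monomial $U^k\in\CS$, hence on the image of~$\s$.

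The actual computations proceed case-by-case in the sign of $N$. For the first pairing with $N\geq 0$, $(\pr_1-\pr_0)(\chi_N)=1-\fS^N\fS^{*N}$ is the orthogonal projection onto $\lin\{e_0,\ldots,e_{N-1}\}\subset\lN$, of trace $N$; for $N<0$ the corresponding operator is $\fS^{|N|}\fS^{*|N|}-1=-(1-\fS^{|N|}\fS^{*|N|})$, of trace $-|N|=N$. For the second pairing, $\s(\fS^k\fS^{*k})=U^k U^{*k}=1$ forces $\s(\chi_N)=1\in\CS$ in both sign cases; then $(\pi_+-\pi_-)(1)$ is, again by \eqref{pi}, precisely the rank-one projection onto $\C e_0\subset\lZ$, and its trace equals $1$.

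The main obstacle is really only the notational bookkeeping of the two sign cases $N\geq 0$ and $N<0$ and verifying that the difference operators on $\lN$ respectively $\lZ$ are genuinely finite rank; the substantive work has already been carried out in Propositions \ref{isoEchi} and \ref{prop2}, which reduce an \emph{a priori} matrix-valued index computation to an elementary one-dimensional trace. This is exactly the simplification advertised by the fibre-product approach: no manipulation of the $(|N|+1)\times(|N|+1)$ projections $E_N$ is required.
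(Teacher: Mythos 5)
Your proposal is correct and follows essentially the same route as the paper: the $K_0$-statement is obtained by chaining Propositions \ref{isoEchi} and \ref{prop2}, and both pairings are evaluated via the trace formula \eqref{CCpair} on the one-dimensional projections $\chi_N$, yielding $\tr(1-\fS^N\fS^{*N})=N$ (resp.\ $\tr(\fS^{|N|}\fS^{*|N|}-1)=N$) and $\tr(\pi_+(1)-\pi_-(1))=1$. The only difference is that you spell out the finite-rank/trace-class verification slightly more explicitly than the paper does, which is harmless.
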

\begin{proof}
The equivalences of the left $\CSq$-modules has been shown in 
Propositions \ref{isoEchi} and \ref{prop2}. In particular, we are allowed 
to choose $\chi_N$ as a representative. 

For all $N\in\Z$, the operator  
$\pi_+\circ\hs(\chi_N)-\pi_-\circ\hs(\chi_N) =\pi_+(1)-\pi_-(1)$
is the projector onto the 1-dimensional subspace $\C e_0$,  
see Equation \eqref{pi}. 
In particular, it is of trace class so that Equation \eqref{CCpair} applies. 
Since the trace of a 1-dimensional projection is 1, we get  
$$
\<\, [(\pi_+\circ\hs\,,\,\pi_-\circ\hs)] \,, [\chi_N]\,\>
=\tr_{\lN}(\pi_+(1)-\pi_-(1))=1. 
$$
Now let $N\geq 0$. 
Then $(\mathrm{pr}_1,\mathrm{pr}_0)(\chi_N)=(\mathrm{pr}_1-\mathrm{pr}_0)(S^NS^{*N},1)=1-S^NS^{*N}$ 
is the projection onto the subspace $\lin\{e_0,\ldots,e_{n-1}\}$. 
Since it is of trace class with trace equal to the dimension of its image, 
we can apply Equation \eqref{CCpair} and get
$$
\<\, [(\mathrm{pr}_1,\mathrm{pr}_0)] \,, [\chi_N]\,\>=\tr_{\lN}(1-S^NS^{*N})=N. 
$$
Analogously, for $N<0$, 
$$
\<\, [(\mathrm{pr}_1,\mathrm{pr}_0)] \,, [\chi_N]\,\>=\tr_{\lN}(S^{|N|}S^{*|N|}-1)=-|N|=N,
$$
which completes the proof. 
\end{proof}

Since the C*-algebra $\CSq$ is isomorphic to the universal C*-algebra 
of the Podle\'s spheres $\podl$, the indices in Equation \eqref{ic} 
have also been obtained in \cite{HMS03} and \cite{W}. 
In the first paper, the computations relied heavily on the index theorem, 
whereas in \cite{W} and Theorem \ref{T1} the traces were computed directly 
by using elementary projections. 

Note that Equation \eqref{ic} has a geometrical interpretation: The pairing 
with the K-homology class $[(\pi_+\circ\hs\,,\,\pi_-\circ\hs)]$ 
detects the rank of the projective 
module, and the pairing $\<\, [(\mathrm{pr}_1,\mathrm{pr}_0)]  \,, [\chi_N]\,\>= N$ 
coincides with the power of $U$ in \eqref{TT} 
and thus computes the ``winding number'', that is,   
the number of rotations of the transition function 
along the equator.  

\subsection{Equivalence to the generic Hopf fibration of quantum SU(2)}

Recall from Section \ref{qhf} that $\SUq=\oplus_{N\in\Z} M_N$, where 
$$
 M_N:=\{\, p\in \SUq\,:\, \Delta_{\SUq} (p)= p\ot U^N\,\}\cong \podl^{|N|+1} P_N
$$
with $P_N\in \Mat_{|N|+1,|N|+1}(\podl) $ given in Equation \eqref{PN}. 
For the definition of the $\OU$-coaction $\Delta_{\SUq}= (\id\ot\pr_s)\circ \Delta$, 
see Section \ref{qhf}. 
Since $\OU$ can be embedded into its universal C*-algebra, which is isomorphic to 
$\CSq$, we can turn $M_N$ into a left $\CSq$-module by considering $\ov{M}_{\!N}:=\CSq^{|N|+1} P_N$. 
It has been shown in \cite{W}, that this left $\CSq$-module 
is isomorphic to $\CSq \chi_N$, and therefore to $\CSpq_N$. 

The aim of this section is to define a left $\CSq$-module and right $\OU$-comodule $P$ 
such that, for all $N\in\Z$, the line bundle associated to the 1-di\-men\-sional left corepresentation 
${}_\C\cop(1)=U^N\ot 1$ is isomorphic to $\ov{M}_{\!N}$. A natural idea would be to consider 
the embedding of $\SUq$ into $\CSU$ and to extend the right coaction $\Delta_{\SUq}$ 
to the C*-algebra closure. But then we face the problem that $\Delta_{\SUq}$ 
is merely a coaction and not an algebra homomorphism. 
If we impose at $\OU$ the obvious multiplicative structure given by $U^NU^K=U^{N+K}$, 
and turn $\oplus_{N\in\Z} \ov{M}_{\!N}$ into a *-algebra such that the right $\OU$-coaction 
becomes an algebra homomorphism, then the C*-closure of 
$\oplus_{N\in\Z} \ov{M}_{\!N}\cong \oplus_{N\in\Z} \CSpq_N$ would be isomorphic 
to $\CSpq$ and not to $\CSU$. Note that
there cannot be an isomorphism between 
$\CSpq$ and $\CSU$ since otherwise, by the pullback diagrams \eqref{CSUq} and  \eqref{CSfp}, 
$\CS\cong \ker(\mathrm{pr_1})\cong \T\,\bar\ot\,\CS$, a contradiction. 

Instead of extending the coaction $\Delta_{\SUq}$ to some closure of $\SUq$, we turn 
$\SUq$ into a left $\CSq$-module by setting 
$P=\CSq \otimes_{\podl}\SUq$ and keeping the $\OU$-coaction, now acting on the second tensor factor. 
Then it follows immediately that 
$$
P=\mathop{\oplus}_{N\in\Z} \CSq\!\underset{\podl}{\otimes}\! M_N 
\  \ \text{and} \ \  
\CSq\underset{\podl}{\otimes} M_N=\{ p\in P\,:\, \cop_P(p)=p\ot U^N\}. 
$$
Thus our aim will be achieved if we show that 
$\ov{M}_{\!N}\cong \CSq\otimes_{\podl} M_N$. 
For this, we prove that $P$, as a left $\CSq$-module and 
right $\OU$-comodule, is isomorphic to the following fibre product 
\begin{equation}                                             \label{SUqfp}
\xymatrix{
& \makebox[48pt][c]{$
\T\, \otimes\,\OU {\underset{(\Phi\circ \pi_1,\pi_2)}{\times}} \T\, \otimes\,\OU$}
\ar[dl]_{\mathrm{pr}_1} \ar[dr]^{\mathrm{pr}_2}& \\
\T \, \otimes\,\OU\ar[d]_{\pi_1:=\sigma \otimes\id } & & \T \, \otimes\,\OU\ar[d]^{\pi_2:=\sigma \otimes\id}\\
\CS\, \otimes\,\OU \ar[rr]_{\Phi} & & \CS\, \otimes\,\OU\,. }
\end{equation}
Here $\Phi$ is defined by $\Phi(f\ot U^N)=fU^N\ot U^N$. 
Then, by comparing the pullback diagrams \eqref{CSfp} and \eqref{SUqfp} 
in the category of left $\CSq$-modules and right $\CS$-comodules, it 
follows that 
\[                                                       \label{cong}
\ov{M}_{\! N}\cong \CSpq_N\cong P\,\Box_{\CSq}\, \C\cong \CSq\otimes_{\podl} M_N
\]
with the 1-dimensional corepresentation ${}_{\CSq}\cop(1)=U^N\ot 1$ on $\C$. 
 
For simplicity of notation, we set 
$$
 \A:=\SUq,\quad \B:=\podl,\quad \bB:=\T\!\underset{(\s,\s)}{\times}\!\T\cong \CSq,\quad C:=\OU. 
$$
Recall that $\B$ can be embedded in $\A$ as well as in $\bB$, 
so both are $\B$-bimodules with respect to the multiplication. 
Moreover, the pullback diagram \eqref{TtimesT} provides us with 
*-algebra homomorphism $\pr_0:\bB\ra\T$ and $\pr_1:\bB\ra\T$ by projecting 
onto the left and right component, respectively. 
Perhaps it should here also be mentioned that $C$ is only considered as a coalgebra, 
not as an algebra. 

Let $v^N_{0},v^N_{1}, \ldots, v^N_{|N|} \in \A$  denote the matrix elements from 
the definition of $P_N$ in \eqref{PN}. Since the entries of $P_N$ belong to $\B$, 
we have $v^N_{j}v^{*N}_{k}\in\B$ for all $j,k=0,\ldots |N|$. 
The following facts are proven in  \cite[Lemma 6.5]{SW}. 
\begin{lem}                                              \label{L1}
Let $l\in\Z$ and $k,m\in \{0,\ldots |l|\}$. 
\begin{enumerate}
\item[(i)]  
For $l\geq 0$, the elements $\prr(v^{l}_{l}v^{l*}_{l})$ and 
$\prl(v^{l}_{0}v^{l*}_{0})$ are invertible in $\CT$. \smallskip
\item[(ii)]  
For $l< 0$, the elements $\prr(v^{l}_{0}v^{l*}_{0})$ and 
$\prl(v^{l}_{|l|}v^{l*}_{|l|})$ are invertible in $\CT$. \smallskip
\item[(iii)]    
$\prr(v^{l}_{k}v^{l\ast}_{l})\,
\prr(v^{l}_{l}v^{l\ast}_{l})^{-1}\,
\prr(v^{l}_{l}v^{l\ast}_{m})=\prr(v^{l}_{k}v^{l\ast}_{m})$ \ \,and 
\smallskip\\
$\prl(v^{l}_{k}v^{l\ast}_{0})
\prl(v^{l}_{0}v^{l\ast}_{0})^{-1}
\prl(v^{l}_{0}v^{l\ast}_{m})=\prl(v^{l}_{k}v^{l\ast}_{m})$\ \,for $l\geq 0$.\smallskip
\item[(iv)]
$\prr(v^{l}_{k}v^{l\ast}_{0})\,
\prr(v^{l}_{0}v^{l\ast}_{0})^{-1}\,
\prr(v^{l}_{0}v^{l\ast}_{m})=\prr(v^{l}_{k}v^{l\ast}_{m})$\ \,and 
\smallskip\\
$\prl(v^{l}_{k}v^{l\ast}_{|l|})
\prl(v^{l}_{|l|}v^{l\ast}_{|l|})^{-1}
\prl(v^{l}_{|l|}v^{l\ast}_{m})=\prl(v^{l}_{k}v^{l\ast}_{m})$\ \,for $l< 0$, 
\end{enumerate}
\end{lem}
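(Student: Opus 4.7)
The plan is to reduce all four statements to computations in the Toeplitz algebra $\T$, using the embedding $\podl\hookrightarrow\CSq\cong \T\times_{(\s,\s)}\T$ from \eqref{etaT} together with the explicit formulas for $v^l_k\in\SUq$ recorded in \cite{SW}. Up to $q$-binomial scalars, the $v^l_k$ are monomials in $\ha,\hb,\hg,\hd$; crucially, the extreme entries $v^l_0$ and $v^l_{|l|}$ are pure powers of single generators ($\ha^l$ or $\hb^l$ for $l>0$, and $\hc^{|l|}$ or $\hd^{|l|}$ for $l<0$), so the products $v^l_{|l|} v^{l*}_{|l|}$ and $v^l_0 v^{l*}_0$ are scalar multiples of powers of the normal elements $\hb\hb^*$, $\ha\ha^*$, $\hc\hc^*$, $\hd\hd^*$, each of which lies in $\podl$.

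For (i) and (ii), I would compute the images of these diagonal entries under $\prl$ and $\prr$ via \eqref{etaT}. They will come out as polynomials of the form $\prod_{j=1}^{|l|}(1-d_j t)$ with $d_j\in(0,1]$, in the self-adjoint operator $t=1-zz^*\in\T$. Since $\mathrm{spec}(t)=\{q^n:n\in\N_0\}\cup\{0\}\subset[0,1]$, each factor $1-d_j t$ has spectrum bounded away from $0$ when $d_j<1$; in the borderline case $d_j=1$ the factor equals $zz^*$, which is invertible in $\T$ by exactly the argument already used in the proof of Proposition \ref{prop2}. Invertibility of the product in $\T$ then follows by continuous functional calculus.

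For (iii) and (iv), I would use the rank-one structure of $P_l=X_l X_l^*\in\Mat_{|l|+1,|l|+1}(\podl)$, where $X_l=(v^l_0,\dots,v^l_{|l|})^\trans$ and $X_l^*X_l=1$ by \eqref{1}. Because the pivot entry $v^l_l$ (respectively $v^l_0$, $v^l_{|l|}$) is a pure power of a single generator, the product $v^l_l v^{l*}_l$ is a normal element commuting with the relevant monomials; a direct expansion using the commutation relations of $\SUq$ then yields the algebraic identity $v^l_k v^{l*}_l(v^l_l v^{l*}_l)^{-1} v^l_l v^{l*}_m=v^l_k v^{l*}_m$ in the localization of $\SUq$ at $v^l_l v^{l*}_l$. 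Applying the *-homomorphism $\prr$ (or $\prl$), which by (i)--(ii) sends $v^l_l v^{l*}_l$ to an invertible element of $\T$, converts this formal identity into the required identity in $\T$.

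The main obstacle is the bookkeeping in the first step: extracting from the explicit formulas in \cite{SW} the precise polynomial form $\prod_j(1-d_j t)$ of the relevant diagonal entries under $\prl$ and $\prr$, and keeping careful track of the correct indices ($l$ versus $0$ versus $|l|$) depending on the sign of $l$ and the choice of projection. Once this form is exhibited, both invertibility and the factorization identities emerge as essentially elementary structural consequences of the rank-one property of $P_l$.
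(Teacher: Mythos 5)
First, a point of comparison: the paper itself contains no proof of Lemma~\ref{L1} --- it is quoted verbatim from \cite[Lemma 6.5]{SW} (``The following facts are proven in \cite[Lemma 6.5]{SW}''), so there is no in-paper argument to match your sketch against. Judged on its own, your strategy (push everything into the Toeplitz algebra via the embedding \eqref{etaT} and argue with explicit operators on $\lN$) is the right general direction, but the sketch goes wrong precisely at the two places where the content of the lemma lives. Your description of the $v^l_k$ is incorrect for the \emph{generic} Podle\'s spheres $s\in(0,1]$: the extreme entries are not pure powers of single generators $\ha,\hb,\hc,\hd$ but powers of the shifted elements such as $\hd+q^{-1}s\hb$ and $\hb-s\hd$; the paper itself relies on $v^l_lv^{l*}_0\sim\eta_s^l$ with $\eta_s=(\hd+q^{-1}s\hb)(\hb-s\hd)$. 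More seriously, your invertibility argument for (i)--(ii) asserts that a borderline factor equal to $zz^*$ ``is invertible in $\T$ by exactly the argument already used in the proof of Proposition~\ref{prop2}.'' It is not: $zz^*=1-t$ and $1\in\mathrm{spec}(t)$ (indeed $z^*e_0=0$), so $0\in\mathrm{spec}(zz^*)$; what Proposition~\ref{prop2} shows to be invertible is $z^{*N}z^N=\prod_{k=1}^N(1-q^kt)$, where every coefficient is $q^k<1$. This is not a cosmetic slip. The asymmetric indexing in (i)--(ii) --- for $l\geq 0$ it is $\prr$ of the $(l,l)$ entry but $\prl$ of the $(0,0)$ entry that is invertible --- encodes the nontriviality of the bundle; the complementary combinations are \emph{not} invertible, and an argument that makes all borderline factors invertible would force the winding number in Theorem~\ref{T1} to vanish.

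For (iii)--(iv), the proposed ``formal identity in the localization'' is not formal. In any algebra in which $v^l_lv^{l*}_l$ is invertible, the element $e:=v^{l*}_l(v^l_lv^{l*}_l)^{-1}v^l_l$ is merely an idempotent, not $1$, and the identity you need is $v^l_k(1-e)v^{l*}_m=0$ for all $k,m$. Realizing the entries as operators $T_jT_k^*$ on $\lN$, this amounts to the injectivity (hence, together with (i)--(ii), invertibility) of the operator representing the pivot entry --- exactly the nontrivial operator-theoretic input, which ``direct expansion using the commutation relations'' does not supply. Note also that $v^l_k\notin\podl$, so $\prr(v^l_k)$ is not even defined; the factorization must be exhibited at the level of a concrete representation of $\SUq$, not inside $\CSq$. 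So both halves of the lemma ultimately rest on an explicit determination of the operators $\prr(v^l_jv^{l*}_k)$ and $\prl(v^l_jv^{l*}_k)$ and a genuine spectral argument, which your sketch defers to ``bookkeeping''; as written it does not close.
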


We can turn $\T$ into a $\bB$-bimodule by setting
$a.t. b := \prl(a)\,t\,\prl(b)$ and 
$a.t. b = \prr(a)\,t\,\prr(b)$, where 
$a,b\in \bB$ and $t\in\T$.
To distinguish between both bimodules, we denote $\T$ equipped 
with the first action by $\T_-$, and write $\T_+$ if we use 
the second action. 
Clearly, as left or right $\CB$-module, both are generated by 
$1\in\T$. 
The next proposition is the key in proving \eqref{cong}.

\begin{prop}                                                  \label{TBM}
The left $\CB$-modules $\T_\pm$ and $\T_\pm\,\otimes_\B\, M_l$ are isomorphic. 
The corresponding isomorphisms are given by 
\begin{align*}
&\psi_{l,+}:\T_+\ra\T_+\otimes_\B M_l,\quad 
\psi_{l,+}(t)= t\, \prl(v^{l}_{0}v^{l*}_{0})^{-1/2}\otimes_\B v^{l}_{0}, 
\quad l\geq 0,\\
&\psi_{l,-}:\T_-\ra\T_-\otimes_\B M_l,\quad 
\psi_{l,-}(t)= t\, \prr(v^{l}_{l}v^{l*}_{l})^{-1/2}\otimes_\B v^{l}_{l}, 
\quad l\geq 0,\\
&\psi_{l,+}:\T_+\ra\T_+\otimes_\B M_l,\quad 
\psi_{l,+}(t)= t\, \prl(v^{l}_{|l|}v^{l*}_{|l|})^{-1/2}\otimes_\B v^{l}_{|l|}, 
\quad l< 0,\\
&\psi_{l,-}:\T_-\ra\T_-\otimes_\B M_l,\quad 
\psi_{l,-}(t)= t\, \prr(v^{l}_{0}v^{l*}_{0})^{-1/2}\otimes_\B v^{l}_{0}, 
\quad l< 0.
\end{align*}
The inverse isomorphisms satisfy, for all $k=0,1,\dots,|l|$, 
\begin{align}                                                    \label{psi+}
 &  \psi_{l,+}^{-1} (1\otimes_\B v^{l}_{k}) = \prl(v^{l}_{k}v^{l\ast}_{0})
\prl(v^{l}_{0}v^{l\ast}_{0})^{-1/2}, \quad l\geq 0,\\
 & \psi_{l,-}^{-1} (1\otimes_\B v^{l}_{k})
=\prr(v^{l}_{k}v^{l\ast}_{l})
\prr(v^{l}_{l}v^{l\ast}_{l})^{-1/2}, \quad l\geq 0,\\
&  \psi_{l,+}^{-1} (1\otimes_\B v^{l}_{k}) = \prr(v^{l}_{k}v^{l\ast}_{|l|})\,
\prr(v^{l}_{|l|}v^{l\ast}_{|l|})^{-1/2}, \quad l< 0,\\
 & \psi_{l,-}^{-1} (1\otimes_\B v^{l}_{k})=\prr(v^{l}_{k}v^{l\ast}_{0})\,
\prr(v^{l}_{0}v^{l\ast}_{0})^{-1/2}, \quad l<0.
\end{align}
\end{prop}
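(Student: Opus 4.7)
The plan is to prove the four assertions by a single template, treating one representative case (say $l\ge 0$ with subscript ``$+$'') in detail and noting that the remaining three follow by the same manipulations once the pivot index and the choice of $\prl$ versus $\prr$ are adjusted according to the appropriate part of Lemma~\ref{L1}. Throughout I write $v:=v^l_0$ for the pivot element.

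The easy direction is that $\psi_{l,+}$ is just right multiplication by $\prl(vv^{l*}_0)^{-1/2}\in\T$, whose existence is Lemma~\ref{L1}(i), followed by tensoring with $v$. This is manifestly well defined, and left $\CB$-linear because the $\CB$-action on $\T_+\otimes_\B M_l$ only touches the left tensor factor. The real content is the inverse. My plan is to define a \emph{global} map $\phi:\T_+\otimes_\B M_l\to \T_+$ by
\[
\phi(t\otimes w):=t\,\prl(wv^{l*}_0)\,\prl(vv^{l*}_0)^{-1/2},
\]
which is sensible because the $\OU$-grading forces $wv^{l*}_0\in M_l\cdot M_{-l}\subset M_0=\B$, so that $\prl(wv^{l*}_0)$ is defined. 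Well-definedness over $\B$ then collapses to the identity $\prl(b)\,\prl(wv^{l*}_0)=\prl(bwv^{l*}_0)$ for $b\in\B$, which is multiplicativity of $\prl$; on generators $1\otimes v^l_k$ the formula reproduces the proposition's expression for $\psi_{l,+}^{-1}(1\otimes v^l_k)$, and the composition $\phi\circ\psi_{l,+}=\mathrm{id}_{\T_+}$ telescopes immediately via $\prl(vv^{l*}_0)^{-1/2}\,\prl(vv^{l*}_0)\,\prl(vv^{l*}_0)^{-1/2}=1$.

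The heart of the argument, where I expect the actual work to live, is the reverse composition. Evaluated on a generator $t\otimes v^l_k$ one obtains
\[
\psi_{l,+}(\phi(t\otimes v^l_k))=t\,\prl(v^l_k v^{l*}_0)\,\prl(vv^{l*}_0)^{-1}\otimes v.
\]
My plan is to resolve $v$ via the normalization \eqref{1}, writing $v=v\sum_m v^{l*}_m v^l_m=\sum_m (vv^{l*}_m)\,v^l_m$; shuttle each $\B$-element $vv^{l*}_m$ across the tensor; then invoke Lemma~\ref{L1}(iii) to collapse the three-factor product $\prl(v^l_k v^{l*}_0)\,\prl(vv^{l*}_0)^{-1}\,\prl(vv^{l*}_m)$ to $\prl(v^l_k v^{l*}_m)$; and finally reassemble $\sum_m v^l_k v^{l*}_m v^l_m=v^l_k$ by a second application of \eqref{1}, obtaining $t\otimes v^l_k$ as required.

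The remaining three cases $(l\ge 0,-)$, $(l<0,+)$, $(l<0,-)$ are handled by exactly the same template after swapping the invertible pivot (using the relevant item of Lemma~\ref{L1}(i)--(ii)) and the matching pivoting identity (from (iii) or (iv)); the algebraic skeleton is unchanged. The principal obstacle is the calculation just sketched: it requires choreographing two structurally independent inputs -- the resolution of unity $\sum_m v^{l*}_m v^l_m=1$ and the pivoting identity of Lemma~\ref{L1} -- in such a way that $1\otimes v$ becomes effectively a cyclic vector for $\T_+\otimes_\B M_l$ over $\CB$.
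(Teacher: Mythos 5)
Your argument is, at its core, the paper's own: the decisive computation---resolving $v^l_0=\sum_m v^l_0v^{l*}_mv^l_m$ via \eqref{1}, shuttling the $\B$-elements $v^l_0v^{l*}_m$ across $\otimes_\B$, and collapsing the resulting triple product with Lemma~\ref{L1}(iii)---is precisely the chain of equalities the paper uses to show that each generator $1\otimes_\B v^l_k$ of $\T_+\otimes_\B M_l$ lies in the image of $\psi_{l,+}$. You merely package it as the verification of a two-sided inverse $\phi$, whereas the paper argues injectivity from invertibility of the pivot and surjectivity from membership of the generators in the image; the two organizations are interchangeable.

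One justification, however, needs repair. You claim that $\phi$ is defined on all elementary tensors because ``the $\OU$-grading forces $wv^{l*}_0\in M_l\cdot M_{-l}\subset M_0=\B$''. That inference is not available here: the coaction $(\id\ot\pr_s)\circ\Delta$ on $\SUq$ is \emph{not} an algebra homomorphism, and the paper explicitly stresses that $M_N$ is only a left $\podl$-module and not a bimodule, so the decomposition $\SUq=\oplus_N M_N$ is not a multiplicative grading and neither $v^{l*}_0\in M_{-l}$ nor $M_l\cdot M_{-l}\subset M_0$ follows from degree bookkeeping. The fact you need is nonetheless true, for a different reason: $M_l=\sum_k\B v^l_k$ (the generation statement cited from \cite[Theorem 4.1]{SW}), and the products $v^l_kv^{l*}_0$ are entries of the projection $P_l$ and hence lie in $\B$, so $wv^{l*}_0\in\B$ for every $w\in M_l$. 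With that substitution your well-definedness check for $\phi$, and the rest of the argument, goes through; the same caveat applies verbatim to the other three cases.
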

\begin{proof}
We prove the proposition for $\psi_{l,+}$ with $l\geq 0$,  
the other cases are treated analogously. 
Since $\prl(v^{l}_{0}v^{l*}_{0})$ is positive and invertible, 
$\prl(v^{l}_{0}v^{l*}_{0})^{-1/2}\in\T$ is invertible, and thus $\psi_{l,+}$ is 
injective. The left $\CB$-module $\T_+\,\otimes_\B\, M_l$ is 
generated by $1\otimes_\B v^{l}_{k}$, $k=0,1,\dots,l$ 
(cf.\ \cite[Theorem 4.1]{SW}). 
As $\psi_{l,+}$ is left $\CB$-linear,  
it suffices to prove that the elements  $1\otimes_\B v^{l}_{k}$ belong 
to the image of $\psi_{l,+}$. Applying \eqref{1} and Lemma \ref{L1}(iii), 
we get 
\begin{align*}                                                  \nonumber
1\otimes_\B v^{l}_{k}&=\mbox{$\sum$}_j 1\otimes_\B v^{l}_{k} v^{l*}_{j} v^{l}_{j} 
=\mbox{$\sum$}_j \prl(v^{l}_{k} v^{l*}_{j} )\otimes_\B  v^{l}_{j} \\  \nonumber
&=\mbox{$\sum$}_j \prl(v^{l}_{k}v^{l\ast}_{0})
\prl(v^{l}_{0}v^{l\ast}_{0})^{-1}
\prl(v^{l}_{0} v^{l*}_{j})\otimes_\B  v^{l}_{j}\\                   \nonumber
&= \mbox{$\sum$}_j \prl(v^{l}_{k}v^{l\ast}_{0})
\prl(v^{l}_{0}v^{l\ast}_{0})^{-1}
\otimes_\B v^{l}_{0} v^{l*}_{j}v^{l}_{j}\\                         \nonumber
&= \prl(v^{l}_{k}v^{l\ast}_{0})
\prl(v^{l}_{0}v^{l\ast}_{0})^{-1}
\otimes_\B v^{l}_{0} 
=\psi_{l,+}\big(\prl(v^{l}_{k}v^{l\ast}_{0})
\prl(v^{l}_{0}v^{l\ast}_{0})^{-1/2}\big).
\end{align*}
This proves the surjectivity of  $\psi_{l,+}$ and Equation \eqref{psi+}. 
\end{proof}

Using the last proposition and the decomposition $\A= \oplus_{N\in\Z} M_N$, we can define 
left $\bB$-linear, right $C$-colinear isomorphisms 
$$
\Psi_- : \T_-\otimes_\B \A \ra \mathop{\oplus}_{N\in\Z}\T_-\otimes U^N,\quad 
\Psi_+ : \T_+\otimes_\B \A \ra \mathop{\oplus}_{N\in\Z}\T_+\otimes U^N
$$
by setting 
\[                                                                      \label{tm}
\Psi_\pm (t\otimes_\B m_N)= \psi_{N,\pm}^{-1}(t\otimes_\B m_N)\,\otimes\, U^N,\qquad 
t\in\T,\ \,m_N\in M_N. 
\]
Next we define left $\bB$-linear, right $C$-colinear surjections 
$$\Prpm : \CB\otimes_\B \A \ra  \T_\pm\otimes_\B \A,
$$ 
by 
\[                                                    \label{prpm}
\pr_-((t_1,t_2)\otimes_\B a) :=  t_1\otimes_\B a, \quad 
 \pr_+((t_1,t_2)\otimes_\B a) :=  t_2\otimes_\B a. 
\]
Furthermore, we turn $\CS$ into a left $\bB$-module by defining $b.f:=\s(b)f$ for all $b\in\bB$ 
and $f\in \CS$. Now consider the following diagram in the category of 
left $\bB$-modules, right $C$-comodules: 
\[                                              \label{cd} 
\xymatrix{                 
\CB\otimes_\B \A \ar[d]_{\Psi_-\circ\pr_-}    \ar[rr]^{\Psi_+\circ\pr_+ } &
 & \T_+\otimes C \ar[d]^{\s\otimes \id} \\
\T_-\otimes C 
\ar[rr]_{\Phi\circ(\s\otimes \id)}&  &\CS\otimes C,
}
\]
where $\Phi$ is the same as in \eqref{SUqfp}. 

\begin{lem}                                  \label{L2}
 The diagram \eqref{cd} is commutative, $\Psi_-\circ\pr_-$ and $\Psi_+\circ\pr_+ $ 
are sur\-jec\-tive and 
$\ker( \Psi_-\circ\pr_-)\cap \ker(\Psi_+\circ\pr_+  )=\{0\}$. 
\end{lem}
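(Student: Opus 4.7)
The plan is to verify the three assertions of the lemma --- commutativity of the square, surjectivity of the two composites $\Psi_\pm\circ\pr_\pm$, and triviality of the intersection of their kernels --- separately.

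The surjectivity is essentially formal. By \eqref{tm} the maps $\Psi_\pm$ are isomorphisms: on each graded piece $N\in\Z$ they are just $\psi_{N,\pm}^{-1}$, invertible by Proposition \ref{TBM}. It therefore suffices to see that $\pr_\pm:\CB\otimes_\B\A\to\T_\pm\otimes_\B\A$ are surjective, and for this it is enough that $\pr_0,\pr_1:\bB\to\T$ are surjective. But for any $t\in\T$, the element $\s(t)\in\CS$ lifts along the surjection $\s:\T\to\CS$ to some $t'\in\T$, so $(t,t')\in\bB$ projects to $t$.

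For commutativity, both composites in \eqref{cd} are left $\CB$-linear and right $C$-colinear, so it suffices to compare them on the generating family $\{1\otimes_\B v^N_k:N\in\Z,\ k=0,\ldots,|N|\}$. Writing
\[
u^N_{jk}\;:=\;\s(\prl(v^N_jv^{N*}_k))\;=\;\s(\prr(v^N_jv^{N*}_k))\in\CS
\]
(the two expressions agree by the fibre-product condition $\s\circ\prl=\s\circ\prr$ on $\bB$), the explicit formulas for $\psi_{N,\pm}^{-1}$ in Proposition \ref{TBM} combined with \eqref{WU} reduce commutativity, for $N\geq 0$, to the single boundary identity
\[
u^N_{k0}(u^N_{00})^{-1/2}\;=\;u^N_{kN}(u^N_{NN})^{-1/2}\,U^N\qquad\text{in }\CS,
\]
and analogously for $N<0$. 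Geometrically this is the cocycle condition that the two local frames of $M_N$ coming from $\psi_{N,\pm}$ are related on the equator by exactly the winding-$U^N$ transition function of \eqref{TT}. I would verify it by substituting the explicit formulas for the matrix coefficients $v^N_k$ recalled from \cite{SW} and collapsing the result using the identities of Lemma \ref{L1}; this is the main technical content of the proof and the one place where the specific form of the $v^N_k$ really enters.

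The triviality of the kernel intersection is cleanest via a flatness argument. Each $M_N\cong\B^{|N|+1}P_N$ is a finitely generated projective left $\B$-module, hence $\A=\bigoplus_N M_N$ is $\B$-flat. Tensoring the defining injection $\CB\hookrightarrow\T\times\T=\T_-\oplus\T_+$ of the fibre product with $\A$ over $\B$ therefore yields an injection
\[
\CB\otimes_\B\A\;\hookrightarrow\;\T_-\otimes_\B\A\,\oplus\,\T_+\otimes_\B\A,
\]
which by construction coincides with $(\pr_-,\pr_+)$. Hence $\ker(\pr_-)\cap\ker(\pr_+)=\{0\}$, and since $\Psi_\pm$ are isomorphisms the same holds for $\ker(\Psi_-\circ\pr_-)\cap\ker(\Psi_+\circ\pr_+)$. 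The main obstacle is therefore the commutativity identity: this is where the $U^N$ sitting inside the transition function $\Phi$ of \eqref{SUqfp} must be matched, on the boundary circle, against the winding of the $\mathrm{SU}_q(2)$ line bundle $M_N$.
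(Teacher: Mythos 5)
Your overall architecture is sound and largely parallels the paper: reduce everything to the generators $1\ot_\B v^l_k$ by $\CB$-linearity, observe that surjectivity is formal, and isolate the commutativity of \eqref{cd} as a single boundary identity, which you correctly write (in the paper's notation, for $l\geq 0$) as
\[
\hs(v^{l}_{k}v^{l\ast}_{0})\,\hs(v^{l}_{0}v^{l\ast}_{0})^{-1/2}
=\hs(v^{l}_{k}v^{l\ast}_{l})\,\hs(v^{l}_{l}v^{l\ast}_{l})^{-1/2}\,U^{l}
\quad\text{in }\CS .
\]
Your flatness argument for $\ker(\Psi_-\circ\pr_-)\cap\ker(\Psi_+\circ\pr_+)=\{0\}$ --- tensoring the right $\B$-linear injection $\CB\hookrightarrow\T\times\T$ with the projective, hence flat, left $\B$-module $\A=\oplus_N M_N$ --- is correct and in fact tidier than the paper's terse direct decomposition. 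The problem is that the boundary identity, which you yourself call the main technical content, is not proved: ``substitute the explicit $v^N_k$ and collapse using Lemma~\ref{L1}'' is a declaration of intent. Moreover, Lemma~\ref{L1}(iii) by itself can only strip off the $k$-dependence, reducing the claim to
\[
\hs(v^{l}_{l}v^{l\ast}_{l})^{-1/2}\,\hs(v^{l}_{l}v^{l\ast}_{0})\,\hs(v^{l}_{0}v^{l\ast}_{0})^{-1/2}=U^{l};
\]
the relations in Lemma~\ref{L1} are blind to the winding, so the factor $U^l$ cannot come out of them --- it must be extracted from the concrete operator realization, and that extraction is the one nontrivial idea of the proof.

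The paper supplies it with a polar-decomposition argument that your sketch misses. One has $v^{l}_{l}v^{l\ast}_{0}\sim\eta_s^{l}$, and the embedding \eqref{etaT} exhibits the polar decomposition $\eta_s^{l}=(S^l,S^l)\,|\eta_s^{l}|$ in $\CSq$, whence $\hs(v^{l}_{l}v^{l\ast}_{0})=\hs(|v^{l}_{l}v^{l\ast}_{0}|)\,U^{l}$: this is where the winding number enters. It then remains to check $\hs(v^{l}_{l}v^{l\ast}_{l})^{-1/2}\,\hs(|v^{l}_{l}v^{l\ast}_{0}|)\,\hs(v^{l}_{0}v^{l\ast}_{0})^{-1/2}=1$, which \emph{does} follow from Lemma~\ref{L1}(iii): taking $k=m=0$ there gives $\hs(v^{l}_{0}v^{l\ast}_{0})\hs(v^{l}_{l}v^{l\ast}_{l})=\hs(v^{l}_{0}v^{l\ast}_{l})\hs(v^{l}_{l}v^{l\ast}_{0})$, so that $\hs(|v^{l}_{l}v^{l\ast}_{0}|)=\bigl(\hs(v^{l}_{0}v^{l\ast}_{0})\hs(v^{l}_{l}v^{l\ast}_{l})\bigr)^{1/2}$ and everything cancels in the commutative algebra $\CS$. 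Your proposed brute-force substitution of the explicit matrix coefficients would presumably succeed eventually, but until that computation (or the polar-decomposition shortcut) is actually carried out, the commutativity of \eqref{cd} --- and with it the lemma --- is unproven.
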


\begin{proof}
Since all maps are left $\CB$-linear, it suffices to prove the lemma  
for generators of the left $\CB$-module $\CB\otimes_\B \A$. Moreover, 
since $\A=\oplus_{N\in\Z}M_N$, we can restrict ourselves to the 
generators of the left $\B$-modules $M_N$. 

Let $l\geq 0$. Since $\s(\pr_0(f))=\s(f)=\s(\pr_1(f))$ for all $f\in\CB$ by \eqref{CSq}, 
we get from Equation \eqref{tm} and Lemma \ref{L1}
\begin{align}                                                            \label{RX}
 (\hs\otimes \id)\circ \Psi_+\circ\Prl \big(1\otimes_\B v^{l}_{k} \big) 
 &=  \hs(v^{l}_{k}v^{l\ast}_{0})
\hs(v^{l}_{0}v^{l\ast}_{0})^{-1/2} \otimes U^l,\\                        \label{L}
\phi\circ(\hs\otimes \id)\circ \Psi_-\circ\Prr\big(1\otimes_\B v^{l}_{k} \big) 
&= \hs(v^{l}_{k}v^{l\ast}_{l})
\hs(v^{l}_{l}v^{l\ast}_{l})^{-1/2}U^l\otimes U^l.
\end{align}
By Lemma \ref{L1} (iii) (with $m=0$), we have  
\[                                                                       \label{k0}
\hs(v^{l}_{k}v^{l\ast}_{0})=
\hs(v^{l}_{k}v^{l\ast}_{l})\,
\hs(v^{l}_{l}v^{l\ast}_{l})^{-1}\,
\hs(v^{l}_{l}v^{l\ast}_{0}).
\]
Inserting the latter equation into \eqref{RX} and comparing with \eqref{L} shows that 
it suffices to prove 
\[                                                                     \label{ul}
\hs(v^{l}_{l}v^{l\ast}_{l})^{-1/2}\,
\hs(v^{l}_{l}v^{l\ast}_{0})\,\hs(v^{l}_{0}v^{l\ast}_{0})^{-1/2} = U^l.
\]
It follows from \cite[Lemma 2.2]{W} 
(with $v^{l}_{l}\sim u_l$ and $v^{l}_{0}\sim w_l$), 
or can be computed directly by using explicit expressions for $v^{l}_{0}$ and $v^{l}_{l}$, that 
$v^{l}_{l}v^{l\ast}_{0}\sim \eta_s^l$. 
From the embedding \eqref{etaT}, we deduce that $\eta_s^l$ has polar decomposition 
$\eta_s^l=(S^l,S^l) |\eta_s^l|$.  Therefore we can write 
$v^{l}_{l}v^{l\ast}_{0}=  (S^l,S^l) |v^{l}_{l}v^{l\ast}_{0}|$ which implies 
$$
\hs(v^{l}_{l}v^{l\ast}_{0})= \hs(|v^{l}_{l}v^{l\ast}_{0}|)\, U^l. 
$$
By comparing with \eqref{ul}, we see that it now suffices to verify 
\[                                                                    \label{|v|}
\hs(v^{l}_{l}v^{l\ast}_{l})^{-1/2}\,
\hs(|v^{l}_{l}v^{l\ast}_{0}|)\,\hs(v^{l}_{0}v^{l\ast}_{0})^{-1/2} = 1. 
\]
Multiplying both sides of  Equation \eqref{k0} with $\hs(v^{l}_{l}v^{l\ast}_{l})$ gives 
$$
\hs(v^{l}_{0}v^{l\ast}_{0})\,
\hs(v^{l}_{l}v^{l\ast}_{l})=
\hs(v^{l}_{0}v^{l\ast}_{l})\,
\hs(v^{l}_{l}v^{l\ast}_{0}).
$$
Thus
$$
\hs(|v^{l}_{l}v^{l\ast}_{0}|)=
\hs\big((v^{l}_{0}v^{l\ast}_{l}v^{l}_{l}v^{l\ast}_{0})^{1/2}\big)
=\big(\hs(v^{l}_{0}v^{l\ast}_{l})\hs(v^{l}_{l}v^{l\ast}_{0})\big)^{1/2}
=\big(\hs(v^{l}_{0}v^{l\ast}_{0})
\hs(v^{l}_{l}v^{l\ast}_{l}) \big)^{1/2},
$$
which proves \eqref{|v|}. This concludes the proof of the commutativity 
of \eqref{cd} for $l\geq 0$. The case $l<0$ is treated 
analogously. 

The surjectivity of $\Psi_-\circ\pr_-$ and $\Psi_+\circ\pr_+ $ follows from the bijectivity of $\Psi_\pm$ 
and the surjectivity of $\pr_\pm$. 

Suppose that 
$\sum_{k=1}^n(r_k,s_k)\ot_B a_k\in \ker( \Psi_-\circ\pr_-)\cap \ker(\Psi_+\circ\pr_+)$. 
Since $\Psi_\pm$ is an isomorphism, we get 
$\sum_{k=1}^n r_k\ot_B a_k=0$ and $\sum_{k=1}^n s_k\ot_B a_k=0$ by \eqref{prpm}. 
Hence $\sum_{k=1}^n(r_k,s_k)\ot_B a_k=
\sum_{k=1}^n (r_k,0)\ot_B a_k+\sum_{k=1}^n (0,s_k)\ot_B a_k=0$ which 
proves last claim of the lemma. 
\end{proof}

We are now in a position to prove the main theorem of this section. 
\begin{thm}
 There is an isomorphism of left $\CSq$-modules and right $\OU$-comodules 
between the fibre product 
$\T\, \otimes\,\OU {\times}_ {(\Phi\circ \pi_1,\pi_2)}\T\, \otimes\,\OU$ from \eqref{SUqfp} and 
$\CSq \otimes_{\podl}\SUq$. 
Moreover, the chain of isomorphisms in \eqref{cong} holds. 
\end{thm}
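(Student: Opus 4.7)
The plan is to build a left $\CSq$-linear, right $\OU$-colinear isomorphism $\Psi$ from $\CSq \otimes_{\podl}\SUq$ onto the fibre product in \eqref{SUqfp}, and then to read the chain \eqref{cong} off the $\OU$-grading. By the commutativity of the square in Lemma \ref{L2}, the universal property of the fibre product supplies a left $\bB$-linear, right $C$-colinear map
\begin{equation*}
\Psi:\CB\otimes_\B \A \lra \T\otimes\OU {\underset{(\Phi\circ \pi_1,\pi_2)}{\times}} \T\otimes\OU, \quad
\Psi(p)=\big((\Psi_-\circ\pr_-)(p),(\Psi_+\circ\pr_+)(p)\big),
\end{equation*}
whose injectivity is immediate from the trivial intersection of kernels established in Lemma \ref{L2}. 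For surjectivity I would work grade-by-grade in $N\in\Z$, using that both sides split as $\oplus_{N\in\Z}$ of their $N$-th isotypic components and that $\Psi$ is right $C$-colinear. Given $(t_-\ot U^N, t_+\ot U^N)$ in the fibre product, surjectivity of $\Psi_-\circ\pr_-$ yields $p_0\in\CB\otimes_\B M_N$ with $(\Psi_-\circ\pr_-)(p_0)=t_-\ot U^N$; commutativity of \eqref{cd} then forces $\Psi(p_0)=(t_-\ot U^N,\tilde t_+\ot U^N)$ with $\s(\tilde t_+)=\s(t_+)$, so the obstruction $t_+-\tilde t_+$ is a compact operator on $\lN$.

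I cancel this obstruction by a correction $q=(0,K)\ot_\B m$, where $m\in M_N$ is chosen so that $\psi_{N,+}^{-1}(1\ot_\B m)$ is invertible in $\T$; by Lemma \ref{L1}\,(i)--(ii), the admissible choices are $m=v^N_0$ for $N\ge 0$ and $m=v^N_{|N|}$ for $N<0$, giving $\psi_{N,+}^{-1}(1\ot_\B m)=\pr_0(v^N_0v^{N*}_0)^{1/2}$ respectively $\pr_1(v^N_{|N|}v^{N*}_{|N|})^{1/2}$. Since $\s$ annihilates compact operators, $(0,K)\in\CB$ for every $K\in\cK(\lN)$, hence $q\in\CB\otimes_\B M_N$ with $\pr_-(q)=0$ and $(\Psi_+\circ\pr_+)(q)=K\,\psi_{N,+}^{-1}(1\ot_\B m)\ot U^N$. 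Taking $K:=(t_+-\tilde t_+)\,\psi_{N,+}^{-1}(1\ot_\B m)^{-1}$ gives $\Psi(p_0+q)=(t_-\ot U^N,t_+\ot U^N)$, completing surjectivity.

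Once $\Psi$ is recognised as an isomorphism, the chain \eqref{cong} is almost immediate. The link $\ov{M}_{\!N}\cong\CSpq_N$ is recorded from \cite{W} in the paragraph preceding Proposition \ref{TBM}; specialising the cotensor product to the one-dimensional left corepresentation ${}_\C\cop(1)=U^N\ot 1$ yields $P\,\Box_{\CSq}\,\C=\{p\in P:\cop_P(p)=p\ot U^N\}=\CSq\otimes_\podl M_N$, supplying the last link; and transporting this isotypic slice through $\Psi$ while identifying the $N$-graded piece of the fibre product with $\T\times_{(U^N\s,\s)}\T\cong\CSpq_N$ exactly as in the proof of Proposition \ref{prop1} supplies the middle link. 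I expect the main obstacle to be the compact-operator correction step in surjectivity: producing enough elements of $\ker(\pr_-)\cap(\CB\otimes_\B M_N)$ whose images under $\Psi_+\circ\pr_+$ exhaust $\cK(\lN)\ot U^N$ rests entirely on the invertibility in $\T$ supplied by Lemma \ref{L1}, without which the correcting $q$ could not be constructed.
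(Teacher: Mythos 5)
Your proposal is correct and follows essentially the same route as the paper: Lemma \ref{L2} is used to identify $\CSq\otimes_{\podl}\SUq$ with the universal object of diagram \eqref{cd}, this is compared with \eqref{SUqfp}, and the chain \eqref{cong} is obtained exactly as in the paper (the first isomorphism quoted from \cite{W}, the middle one from the equivalence of pullback diagrams, the last from colinearity of the maps in \eqref{cd}). The one point where you go beyond the paper is the explicit surjectivity of the induced map onto the fibre product --- the grade-by-grade compact-operator correction $(0,K)\otimes_{\B}v^N_0$ (resp.\ $v^N_{|N|}$), resting on the invertibility supplied by Lemma \ref{L1} and on $\ker\s=\cK(\lN)$ --- which the paper leaves implicit even though commutativity, surjectivity of the two legs and trivial intersection of kernels do not by themselves force a commutative square to be a pullback; your argument supplies that missing verification correctly.
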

\begin{proof}
Lemma \ref{L2} states that  $\CSq \otimes_{\podl}\SUq$ is an universal object of the 
pull back diagram \eqref{cd}. Comparing \eqref{cd} and \eqref{SUqfp} shows that 
both pullback diagrams define up to isomorphism the same universal object 
which proves the first part of  the theorem. 

The first isomorphism in \eqref{cong} follows from the Murray-von Neumann equivalence 
of the corresponding projections, see \cite{W}.  
The second isomorphism follows from the above equivalence of pullback diagrams, 
and the last one from fact that all mappings in \eqref{cd} are right 
$\OU$-colinear. 
\end{proof}

\subsection*{Acknowledgment}
The author thanks Piotr M.\ Hajac for many discussions on the subject. This work was financially supported by the CIC of the Universidad Michoacana 
and European Commission grant PIRSES-GA-2008-230836.



\begin{thebibliography}{99}

\bibitem{BaumHMW} 
P. Baum, P.\ M.\ Hajac, R.\ Matthes, W.\ Szyma\'nski,
\textit{The $K$-theory of Heegaard-type quantum 3-spheres.}  $K$-Theory \textbf{35}  (2005), 159--186.

\bibitem{CM1}
D. Calow, R. Matthes, 
\textit{Covering and gluing of algebras and differential algebras.} 
J. Geom. Phys. \textbf{32} (2000) 364--396.

\bibitem{CM2}
D. Calow, R. Matthes, 
\textit{Connections on locally trivial quantum principal fibre bundles.}
J. Geom. Phys. \textbf{41} (2002) 114--165.

\bibitem{H}
P.~M.~Hajac, 
\textit{Bundles over quantum sphere and noncommutative index theorem.}
$K$-Theory \textbf{21} (1996), 141--150.

\bibitem{HKMZ}
 P.~M. Hajac, U.\ Kr\"ahmer, R.\ Matthes, B.\ Zieli\'nski, 
\textit{Piecewise principal comodule algebras}. 
arXiv:0707.1344v2. 

\bibitem{HMS03}
P.~M. Hajac, R.~Matthes, W.~Szyma\'nski,
\textit{Chern numbers for two families of noncommutative Hopf fibrations.}
C. R. Math. Acad. Sci. Paris  \textbf{336}  (2003), 925--930.

\bibitem{HMS06} 
P. M. Hajac, R. Matthes, W.  Szyma\'nski, 
\textit{A locally trivial quantum Hopf fibration.}  
Algebr. Represent. Theory  \textbf{9}  (2006), 121--146.

\bibitem{HMS} 
P. M. Hajac, R. Matthes, W.  Szyma\'nski, 
\textit{Noncommutative index theory for mirror quantum spheres.} 
C. R. Math. Acad. Sci. Paris 
\textbf{343}  (2006), 731--736.

\bibitem{HW}
P.~M. Hajac, E.~Wagner, 
\textit{The pullbacks of principal coactions.}
Preprint, arXiv:\!\! 1001.0075v1[math.KT].

\bibitem{KL} S.~Klimek, A.~Lesniewski, 
\textit{A two-parameter quantum deformation of the unit disc.} 
J.\ Funct.\ Anal.\ \textbf{115} (1993), 1--23.


\bibitem{KS}
K.~A. Klimyk, K.~Schm\"udgen, 
\textit{Quantum Groups and Their Representations.} Sprin\-ger, Berlin, 1997.

\bibitem{MNW}
T.~Masuda, Y.~Nakagami, J.~Watanabe,
\textit{Noncommutative
differential geometry on the quantum two sphere of Podle\'s.
I: An Algebraic Viewpoint.}
$K$-Theory \textbf{5} (1991), 151--175.

\bibitem{MS}
E.~F. M\"uller, H.-J.~Schneider, 
\textit{Quantum homogeneous spaces with faithfully flat module structures.}
Israel J. Math. \textbf {111} (1999), 157--190.


\bibitem{P}
P. Podle\'s, 
\textit{Quantum spheres.}
Lett. Math. Phys. \textbf{14}  (1987), 193--202.


\bibitem{SW}
K. Schm\"udgen, E.~Wagner, 
\textit{Representations of cross product algebras of Podle\'s quantum spheres.}
J. Lie Theory \textbf{17} (2007), 751--790.

\bibitem{s-a91} 
A.~J.-L.~Sheu, 
\textit{Quantization of the Poisson $SU(2)$ and its Poisson homogeneous space
-- the 2-sphere.}
Commun.\ Math.\ Phys.\ \textbf{135} (1991), 217--232.


\bibitem{W}
E. Wagner, 
\textit{Fibre product approach to index pairings for the generic 
Hopf fibration of SUq(2),} accepted by Journal $K$-Theory, arXiv:0902.3777v1. 


\end{thebibliography}
\end{document}